\tikzset{commutative diagrams/.cd}
\newtheorem{theorem}{Theorem}[subsection]
\newtheorem{corollary}[theorem]{Corollary}
\newtheorem{lemma}[theorem]{Lemma}
\newtheorem{proposition}[theorem]{Proposition}
\theoremstyle{definition}
\newtheorem{definition}[theorem]{Definition}
\newtheorem{question}[theorem]{Question} 
\newtheorem{claim}[theorem]{Claim}
\newtheorem{example}[theorem]{Example}
\newtheorem{remark}[theorem]{Remark}
\newtheorem{notation}[theorem]{Notation}
\newcommand{\E}{\mathcal{E}}
\newcommand{\D}{\mathcal{D}}
\newcommand{\N}{\mathcal{N}}
\newcommand{\op}[1]{\operatorname{#1}}
\newcommand{\dd}{\delta}
\newcommand{\Ca}{\mathcal{C}}
\newcommand{\bb}{\bullet}
\numberwithin{subsection}{section}
\newcommand{\Set}{\op{Sets}}
\newcommand{\J}{\mathcal{J}}
\newcommand{\M}{\mathcal{M}}
\newcommand{\sset}{\op{Set}_{\Delta}}
\title{Six-Functor Formalisms I : Constructing functors using category of simplices.}
\author{Chirantan Chowdhury}
\date{\today}
\begin{document}

\maketitle{}

\begin{abstract}
    This article is first in a series of papers where we  reprove the statements in constructing the Enhanced Operation Map and the abstract six-functor formalism developed by Liu-Zheng. In this paper, we prove a theorem regarding constructing functors between simplicial sets using the category of simplices. We shall reprove the statement using the language of marked simplicial sets and studying injective model structure on functor categories. The theorem is a crucial tool and will be used repeatedly in reproving the $\infty$-categorical compactification and construction of abstract six-functor formalisms in the forthcoming articles. 
\end{abstract}
\tableofcontents
\section{Introduction}

The six-functor formalism was developed by Grothendieck and others in order to understand duality statements. The initial setup was developed in the setting of triangulated categories. In the recent years with the development of $\infty$-categories due to Lurie (\cite{HTT},\cite{HA} and \cite{SAG}), six-functor formalism are formulated in the setting of stable presentable $\infty$-categories. This was developed by Liu and Zheng (\cite{Gluerestnerv} and \cite{liu2017enhanced}) in order to extend six operations for derived categories of $\ell$-adic sheaves from schemes to algebraic stacks. Since then, there have been many developments of six functor formalism in many contexts in algebraic, arithmetic geometry  and motivic homotopy theory(\cite{gulotta2022enhanced}, \cite{mann2022padic}, \cite{scholzesixfun}, \cite{khan2021generalized} and \cite{Chow1}). Unfortunately, the papers \cite{Gluerestnerv} and \cite{liu2017enhanced} are not published for quite some time and a lot of recent developments are based on crucial results developed in these papers. The main goal of the series of articles (this article followed by \cite{chowdhury2024sixfunctorformalismsii} and \cite{chowdhury2024sixfunctorformalismsiiiconstruction}) is to reprove a simplified version of main results in these papers assuming the language of higher categories developed by Lurie.\\

The two papers written by Liu and Zheng involves simplicial tools to extend the six functor formalism from smaller to bigger categories (schemes to algebraic stacks for example).  The crucial groundbreaking idea due to them is constructing the \textit{Enhanced Operation Map} which is a morphism between simplicial sets encoding all the six functors and important properties like base change, projection formula. They also developed an algorithm called the DESCENT program which is a method to extend the formalism from a smaller subcategory to the bigger category using the Enhanced Operation Map. Recently, Mann's thesis (\cite{mann2022padic}) uses a simpler convenient  notion of a \textit{3-functor formalism}  and proves analogous results of the DESCENT program in order to construct six functor formalism for $p$-torsion etale sheaves associated to a small $v$-stack. Although the notion of $3$-functor formalism is much easier to explain and understand, but the construction of such formalisms still uses crucial theorems proved by Liu and Zheng which are very technical and hard to understand.  Liu and Zheng prove a technical statement (\cite[Proposition 2.1.4]{Gluerestnerv}) which is used repeatedly to construct the Enhanced operation map.  In this article, we reprove a simpler version of this technical statement (\cref{mainthm}) which is sufficient enough for studying the six functor formalism. Let us motivate this technical statement by revisiting the extraordinary pushforward map in the context of 
 \'etale cohomology of schemes.\\

Let $f: X \to Y$ be a separated morphism of finite type of quasi-compact and quasi-separated schemes and $\Lambda$ be a torsion ring. We have the extraordinary pushforward map  \[ f_!: D(X,\Lambda) \to D(Y,\Lambda)  \]
	which when restricted to open immersions is the map $f_{\#}$ and to proper morphisms the map $f_*$ (here $D(X,\Lambda)$ is the derived category of $\Lambda$-constructible \'etale sheaves on $X$). The construction of $f_!$ involves the general theory of gluing two psuedofunctors developed by Deligne (\cite[Section 3]{Delignecohomologyproper}). Let us briefly recall the setup of the construction. 
 \begin{definition}\label{compactification2catdef}
      For any morphism $f$ as above, we consider the $2$-category of compactifications $\op{Sch}^{\op{comp}} $ whose objects are schemes and morphisms are triangles
	\begin{equation}
		\begin{tikzcd}
			{} & \overline{Y} \arrow[dr,"p"] & {} \\
			X \arrow[ur,hookrightarrow,"j"] && Y
		\end{tikzcd}
	\end{equation}
	where $j$ is open and $p$ is proper.
 \end{definition}
 It is important to note that one can compose morphisms of such form due to Nagata's theorem of compactification. Then one can define a pseudo-functor $F_c: \op{Sch}^{\op{comp}} \to \op{Cat}_1$ which sends a scheme $X$ to $D(X,\Lambda)$ and a triangle of the form above to the composition $p_* \circ j_{\#}$ (here $\op{Cat}_1$ denotes the $2$-category of categories). The theory of gluing in $2$-categories tell us that the functor $F_c$ can be extended to a functor $f_!$ from the category $\op{Sch}'$ consisting of schemes where morphisms are separated and finite type. In other words, the diagram 
	\begin{equation}
		\begin{tikzcd}
			\op{Sch}^{\op{comp}} \arrow[r,"F_c"] \arrow[d,"\op{pr}"] & \op{Cat}_1 \\
			\op{Sch}' \arrow[ur,"F_!"] & {}
		\end{tikzcd}
	\end{equation} 
	commutes. . \\
 The main points involved in the above construction are :
 \begin{enumerate}\label{conditionsforextensioninschemes}
     \item For any morphism $f \in \op{Sch}'$, the category of compactifications $\op{Comp}(f)$ (the fiber of $f$ over $\op{pr}$)  is filtered. In other words given any compactifications, there is a third compactifications which maps to both of the compactifications.
     \item Any two compactifications gives the equivalent functors between the derived categories. In other words for two compactifications $(j,p)$ and $(j',p')$ we have $p_* \circ j_{\#} \cong p'_*\circ j'_{\#}$. Thus we have a functor between categories:
     \begin{equation}\label{compactificationnattransform}
         \op{Comp}(f) \to \op{Fun}^{\cong}([1],\op{Cat}_1)
     \end{equation}
     where $\op{Fun}^{\cong}([1],\op{Cat}_1)$ is the largest groupoid inside $\op{Fun}([1],\op{Cat}_1)$.
 \end{enumerate}
In the setting of $\infty$-categories, we replace the category $\op{Sch}^{\op{comp}}$ by a simplicial set $\dd^*_2 N(\op{Sch}')^{cart}_{P,O}$. The $n$-simplices of $\dd^*_2N(\op{Sch}')^{\op{cart}}_{P,O}$ are $n \times n$ grids of the form 
	
	\begin{equation}
		\begin{tikzcd}
			X_{00} \arrow[r] \arrow[d] & X_{01} \arrow[r] \arrow[d] & \cdots & X_{0n} \arrow[d] \\
			X_{10} \arrow[r]  & X_{11} \arrow[r] & \cdots & X_{1n} \\
			\vdots & \vdots & \vdots & \vdots \\
			X_{n1} \arrow[r] & X_{n2} & \cdots & X_{nn}
		\end{tikzcd}
	\end{equation}
	where vertical arrows are open, horizontal arrows are proper and each square is a pullback square. Also one has a natural morphism $p_{\op{Sch}'}:\dd^*_2N(\op{Sch}')^{\op{cart}}_{P,O} \to N(\op{Sch}')$ induced by composition along the diagonal. The following result which is proven in the next article of the series (\cite{chowdhury2024sixfunctorformalismsii}) is the following:
 \begin{theorem}\cite[Corollary of Theorem 1.0.3]{chowdhury2024sixfunctorformalismsii}
     Let $F: \dd^*_2N(\op{Sch'})^{\op{cart}}_{P,O} \to \D$ be a functor where $\D$ is an $\infty$-category. Then there exists a functor $F': N(\op{Sch}') \to \D$ such that the diagram 
     \begin{equation}
         \begin{tikzcd}
             \dd^*_2N(\op{Sch'})^{\op{cart}}_{P,O} \arrow[r,"F"] \arrow[d,"p_{\op{Sch'}}"] & \D  \\
             N(\op{Sch}') \arrow[ur,"F'",swap] & {}
         \end{tikzcd}
     \end{equation}
     commutes.
     
 \end{theorem}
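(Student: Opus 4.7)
The plan is to apply \cref{mainthm} to the projection $p_{\op{Sch}'}: \dd^*_2N(\op{Sch}')^{\op{cart}}_{P,O} \to N(\op{Sch}')$ together with the functor $F$. The two informal conditions listed after \cref{compactification2catdef} are precisely the classical shadow of the hypotheses that must be verified in the $\infty$-categorical setting in order to feed the situation into the main theorem. The overall strategy mirrors Deligne's $2$-categorical gluing, but packaged simplicially so that the ambient target is an $\infty$-category $\D$ rather than $\op{Cat}_1$.

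The first step is to verify, for each $n$-simplex $\sigma$ of $N(\op{Sch}')$, that the fiber of $p_{\op{Sch}'}$ over $\sigma$ is weakly contractible. When $n=1$ and $\sigma$ corresponds to a single separated morphism $f: X \to Y$ of finite type, this fiber is modelled on the nerve of the category $\op{Comp}(f)$, whose filteredness is an instance of Nagata's compactification theorem: given two compactifications $(\overline{Y}_i,j_i,p_i)$ for $i=1,2$, a common refinement is obtained by taking the schematic closure of the graph of $X \to \overline{Y}_1 \times_Y \overline{Y}_2$. For general $n$, one argues inductively, applying Nagata's theorem both to compose compactifications along the diagonal of the grid and to arrange that each resulting square is Cartesian, exactly as required by the subscript $(-)^{\op{cart}}_{P,O}$. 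The second step is to verify that $F$ sends every morphism inside a fiber to an equivalence in $\D$; this is the $\infty$-categorical counterpart of \cref{compactificationnattransform} and is forced by the Cartesian condition imposed on the $n$-simplices of $\dd^*_2N(\op{Sch}')^{\op{cart}}_{P,O}$.

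The main obstacle will be the fiberwise contractibility statement in higher simplicial degrees: one has to upgrade the classical filteredness of $\op{Comp}(f)$ to weak contractibility of the fiber over an arbitrary composable chain of morphisms, and do so while simultaneously maintaining the Cartesian condition on every square of the grid. This requires a careful iterated construction via Nagata's theorem for composites, together with a check that the resulting ``refinement'' functors are cofinal in the appropriate sense. Once both the contractibility of the fibers and the coherence of $F$ on fibers are in place, \cref{mainthm} produces the desired factorisation $F': N(\op{Sch}') \to \D$ and the commutativity of the diagram essentially formally.
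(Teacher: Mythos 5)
You should first note that this article does not actually prove the quoted theorem: it is stated in the introduction purely as motivation, cited from Liu--Zheng, and the paper explicitly defers its proof to the sequel (\cite{sixfunformalismsII}), remarking that ``the proof of theorem of $\infty$-categorical compactification is more technical than the one on the level of $2$-categories'' and requires a study of specific combinatorial simplicial sets attached to compactifications and pullback squares. What the paper does prove here is only the $2$-categorical shadow (\cref{compactificationcorollary2cat}) and the engine (\cref{mainthm}). Your proposal correctly identifies that engine and mirrors the strategy of \cref{compactificationcorollary2cat}, so the overall plan is the intended one; but as a proof of the $\infty$-categorical statement it leaves precisely the hard steps unargued.

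Concretely, three gaps remain. First, to invoke \cref{mainthm} you must produce an actual functor $\N\colon (\Delta_{/N(\op{Sch}')})^{op}\to \sset$ together with a natural transformation $\alpha\colon \N\to \op{Map}[N(\op{Sch}'),\D]$; taking ``the fiber of $p_{\op{Sch}'}$ over $\sigma$'' does not automatically give this, because one must check functoriality under \emph{all} face and degeneracy maps of $\Delta_{/K}$ (degenerate simplices are exactly where \cref{mappingfunctorinjfibrant} is delicate, as the paper stresses), and this is where the grid combinatorics of $\dd^*_2N(\op{Sch}')^{\op{cart}}_{P,O}$ enter. Second, your claim that $F$ sends every morphism within a fiber to an equivalence ``forced by the Cartesian condition'' is asserted, not proved; in the $2$-categorical corollary the analogous statement is the \emph{support property}, which the paper takes as an explicit hypothesis on $F_c$ rather than deriving it, so in the $\infty$-categorical setting you owe an argument that an arbitrary functor out of $\dd^*_2N(\op{Sch}')^{\op{cart}}_{P,O}$ automatically has this property (this is the substance of Liu--Zheng's result, not a formality). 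Third, the weak contractibility of $\N(n,\sigma)$ for $n\ge 2$, with the Cartesian condition maintained throughout the grid, is exactly the ``main obstacle'' you name but do not resolve; the filteredness of $\op{Comp}(f)$ via Nagata handles only the $1$-simplex case. You should also record the verification of condition (2) of \cref{mainthm}, namely the tautological vertex $\omega\in\Gamma(p_{\op{Sch}'}^*\N)_0$ sending each grid-simplex to itself in the fiber over its diagonal, and note that the resulting commutativity is only up to equivalence in $\op{Fun}(\dd^*_2N(\op{Sch}')^{\op{cart}}_{P,O},\D)$, as in the conclusion of \cref{mainthm}.
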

 The above theorem and the discussion above on the level of $2$-categories poses the following question :
 \begin{question}
 Given a morphism of simplicial sets $i: K' \to K$ and a morphism $f': K \to \Ca$ when $\Ca$ is an $\infty$-category. Under what conditions we can construct a morphism $f: K \to \Ca$ such that 
 \begin{equation}
     \begin{tikzcd}
         K' \arrow[r,"f'"] \arrow[d,"i"] & \Ca \\
         K \arrow[ur,"f",swap] & {}
     \end{tikzcd}
     \end{equation}
     commutes?
 \end{question}

Let us try to reformulate the conditions for gluing on the level of $2$-categories in the setting of simplicial sets:

\begin{enumerate}
    \item For every morphism $f$, there exists a filtered category $\op{Comp}(f)$. In the setting of simplicial sets, this must generalize to associating a weakly contractible simplicial set (a filtered category is weakly contractible) to every $\sigma :\Delta^n \to K$. This leads to motivating the use of \textit{the category of simplices}. The category of simplicies consists of objects which are pairs $(n,\sigma)$ where $n \ge 0$ and $ \sigma : \Delta^n \to K$ ( see \cref{catofsimpldef} for precise definition). The above condition can be reformulated into the existence of a functor
    \begin{equation}
        \N: (\Delta_{/K})^{op} \to \sset    \end{equation}
        such that $\N(n,\sigma)$ is weakly contractible.

      \item   The functor \cref{compactificationnattransform} can be realized as morphism in the functor category $\op{Fun}((\Delta_{/K})^{op},\sset)$. The left hand side of the functor is replaced by the functor $\N$ in the above paragraph. A generalization of $\op{Fun}^{\cong}([1],\D)$ is the \textit{mapping functor} (\cref{mappingfunctor})
      \begin{equation}
          \op{Map}[K,\Ca] : (\Delta_{/K})^{op} \to \sset ~ ~ (n,\sigma) \to \op{Fun}^{\cong}(\Delta^n,\Ca).
      \end{equation}
      Then the simplicial analogue of \cref{compactificationnattransform} is the existence of a natural transformation
      \begin{equation}
          \alpha : \N \to \op{Map}[K,\Ca].
      \end{equation}
\end{enumerate}

  We need some more notations in order to state the theorem. For any functor $F \in \op{Fun}((\Delta_{/K})^{op},\sset)$, let $\Gamma(F) = \op{lim}_{(\Delta_{/K})^{op}} F \in \sset$ (\cref{globalsectionfunctor}) and $i^*F$ to be the functor $(\Delta_{/K'})^{op} \to (\Delta_{/K})^{op} \xrightarrow{F} \sset$ (\cref{commutativityofglobalsecunderpullback}). The theorem is  stated as follows:
 \begin{theorem}(\cref{mainthm})
Let $K',K$ be simplicial sets and $\Ca$ be a $\infty$-category. Let $f': K' \to \Ca$ and $i:K' \to K$ be morphisms of simplicial sets. Let $\N \in (\sset)^{(\Delta_{/K})^{op}}$ and $\alpha: \N \to \op{Map}[K,\Ca]$ be a natural transformation. If
\begin{enumerate}
 \item (\textit{Weakly contractibility}) for $(n,\sigma) \in \Delta_{/K}$, $\N(n,\sigma)$ is weakly contractible,
 \item (\textit{Compatability with $f'$}) there exists $\omega \in \Gamma(i^*\N)_0$ such that $\Gamma(i^*\alpha)(\omega)=f'$ 
 \end{enumerate}
 
then there exists a map $f: K \to \Ca$ such that the following diagram 
\begin{equation}
    \begin{tikzcd}
        K' \arrow[r,"f'"] \arrow[d,"i"] & \Ca \\
        K \arrow[ur,"f"] & {}
    \end{tikzcd}
\end{equation}
 commutes. In other words, $f' \cong f \circ i $ in $\op{Fun}(K',\Ca)$.
\end{theorem}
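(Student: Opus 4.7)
The strategy is to translate the extension problem into a lifting problem at the level of global sections. I would first establish the key identification $\Gamma(\op{Map}[K,\Ca]) \cong \op{Fun}^{\cong}(K,\Ca)$: since $K = \cl_{(n,\sigma) \in \Delta_{/K}} \Delta^n$ and $\op{Fun}^{\cong}(-,\Ca)$ carries colimits to limits, the limit of the cores $\op{Fun}^{\cong}(\Delta^n,\Ca)$ along $(\Delta_{/K})^{op}$ recovers $\op{Fun}^{\cong}(K,\Ca)$. Producing the desired $f : K \to \Ca$ up to equivalence thus reduces to producing a vertex of $\Gamma(\op{Map}[K,\Ca])$, and the plan is to lift $\omega$ (up to homotopy) through the restriction map $\Gamma(\N) \to \Gamma(i^*\N)$ to some $\tilde\omega \in \Gamma(\N)_0$ and then set $f := \Gamma(\alpha)(\tilde\omega)$.

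To carry out this lift, I would work in the injective model structure on $\sset^{(\Delta_{/K})^{op}}$. Take an injectively fibrant replacement $\N \hookrightarrow \N^{\op{fib}}$ by a pointwise weak equivalence, so that $\N^{\op{fib}}$ remains pointwise weakly contractible. The limit functor $\Gamma$ is right Quillen, with left adjoint the constant-diagram functor, hence preserves weak equivalences between injectively fibrant objects; applying it to $\N^{\op{fib}} \to *$ shows that $\Gamma(\N^{\op{fib}})$ is a weakly contractible Kan complex. Since $i^*$ preserves injective fibrations and pointwise weak equivalences, the same reasoning on $\Delta_{/K'}$ yields that $\Gamma(i^*\N^{\op{fib}})$ is also weakly contractible. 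One must also verify (or arrange via a further replacement) that $\op{Map}[K,\Ca]$ is injectively fibrant, so that $\alpha$ extends along $\N \hookrightarrow \N^{\op{fib}}$ to $\alpha^{\op{fib}} : \N^{\op{fib}} \to \op{Map}[K,\Ca]$; this uses crucially that $\Ca$ is an $\infty$-category, together with the marked-simplicial-set description of the cores $\op{Fun}^{\cong}(\Delta^n,\Ca)$ advertised in the abstract.

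Granting these preliminaries, the argument concludes smoothly. The restriction map $\Gamma(\N^{\op{fib}}) \to \Gamma(i^*\N^{\op{fib}})$ is a map between two weakly contractible Kan complexes and hence a weak equivalence. The image of $\omega$ in $\Gamma(i^*\N^{\op{fib}})_0$ is then path-connected to $i^*\tilde\omega$ for any chosen vertex $\tilde\omega \in \Gamma(\N^{\op{fib}})_0$, so defining $f := \Gamma(\alpha^{\op{fib}})(\tilde\omega)$ produces a map $K \to \Ca$ whose restriction $f \circ i$ is path-connected in $\op{Fun}^{\cong}(K',\Ca)$ to $\Gamma(i^*\alpha)(\omega) = f'$. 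This is precisely the equivalence $f \circ i \cong f'$ in $\op{Fun}(K',\Ca)$ demanded by the theorem. The main obstacle I anticipate is not the final global-sections assembly but the preliminary model-categorical setup: checking that $\op{Map}[K,\Ca]$ is injectively fibrant and that restriction along $i$ interacts properly with injectively fibrant replacements. Both rest on a careful analysis of the injective model structure on functor categories indexed over $\Delta_{/K}$ through the lens of marked simplicial sets, which I would develop as a preparatory section before executing the sketch above.
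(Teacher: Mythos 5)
Your proposal is correct in outline and shares the paper's two pillars — the identification $\Gamma(\op{Map}[K,\Ca]) = \op{Map}^{\sharp}(K^{\flat},\Ca^{\natural})$ (the core of $\op{Fun}(K,\Ca)$) and the injective fibrancy of $\op{Map}[K,\Ca]$, which the paper indeed isolates as \cref{mappingfunctorinjfibrant} and proves separately, so deferring it is legitimate — but it diverges at the decisive step. The paper does not take an abstract injectively fibrant replacement: it forms the pointwise cone $\N^{\triangleright}(n,\sigma) = \N(n,\sigma)^{\triangleright}$, notes that $\N \to \N^{\triangleright}$ is a pointwise anodyne because each $\N(n,\sigma)$ is weakly contractible (\cite[Corollary 4.4.4.10]{HTT}), extends $\alpha$ to $\beta$ over $\N^{\triangleright}$ by injective fibrancy of the target, and sets $f := \Gamma(\beta)(z)$ where $z$ is the cone point. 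The cone point is a canonical, strictly compatible family of vertices: it exists in $\Gamma(\N^{\triangleright})$ for free, restricts on the nose to the cone point of $\Gamma(i^*\N^{\triangleright})$, and is joined to the image of $\omega$ by the canonical cone edges. No contractibility statement about any global-section object is ever needed. Your route instead must prove that $\Gamma(\N^{\op{fib}})$ and $\Gamma(i^*\N^{\op{fib}})$ are nonempty and weakly contractible via the Quillen adjunction $(c,\Gamma)$ and Ken Brown's lemma; this buys a somewhat more conceptual statement (global sections of a fibrant pointwise-contractible diagram are contractible) at the price of extra machinery and the verification below.

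The one step you assert without justification, and which fails for restriction along an arbitrary index functor, is that $i^*$ preserves injective fibrations. In general $u^*$ is right Quillen for the injective structures only when the left Kan extension $u_!$ preserves pointwise cofibrations and pointwise trivial cofibrations, and colimits do not preserve weak equivalences. You are saved here by a special feature of the situation: $\Delta_{/K'} \to \Delta_{/K}$ is a discrete fibration (both are categories of elements of presheaves on $\Delta$), so left Kan extension along $(\Delta_{/K'})^{op} \to (\Delta_{/K})^{op}$ is computed objectwise as the coproduct over the fibre, $(i_!F)(n,\sigma) = \coprod_{\tau \in K'_n,\ i(\tau)=\sigma} F(n,\tau)$, and coproducts do preserve monomorphisms and anodynes. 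Without this (or an equivalent) argument, your claim that $\Gamma(i^*\N^{\op{fib}})$ is weakly contractible is unsupported; with it, your proof goes through. The paper's cone-point device is precisely what lets it sidestep this question altogether.
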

The above theorem describes some conditions in order to ensure the construction of the functor $f$. In the theorem of $\infty$-categorical compactification (\cite[Theorem 1.0.3]{chowdhury2024sixfunctorformalismsii}) and partial adjoints (\cite{chowdhury2024sixfunctorformalismsiiiconstruction}), we shall verify these conditions in order to construct functors. \\

We briefly describe the sections of the article:
\begin{enumerate}
    \item In Section 2, we recall relevant notions of marked simplicial sets due to Lurie (\cite[Section 3.1]{HTT}). In particular we review the Cartesian model structure on marked simplicial sets.
    \item In Section 3, we firstly study the injective model structure on the functor category $\op{Fun}(\J,\sset)$ where $\J$ is an ordinary category.  We review the category of simplices and the mapping functor (\cref{mappingfunctor}). The section ends with proving an important proposition showing that the mapping functor is a fibrant object in the injective model structure (\cref{mappingfunctorinjfibrant}). This proposition helps us to prove \cref{mainthm} in the following section.
    \item In Section 4, we state and proof the theorem (\cref{mainthm}). We also see how gluing in two categories in the context of extraordinary pushforward map can be reproved using \cref{mainthm}.
    \item Appendices A and B concern on proving lemmas which are used in proving \cref{mappingfunctorinjfibrant}.
\end{enumerate}
 	\subsection*{Acknowledgments}
		 The paper was written while the author was a Post Doc at the University of Duisburg-Essen, supported by the ERC Grant "Quadratic Refinements in Algebraic Geometry". He would like to thank Alessandro D'Angelo for helpful discussions regarding the technical lemmas involved in the article. He would also like to express his gratitude to all members of ESAGA group in Essen. 

\section{Marked Simplicial Sets and Cartesian Model Structure}
In this section, we are going to recall definitions and results on marked simplicial sets (\cite[Section 3.1]{HTT}). Marked simplicial sets are simplicial sets with additional "markings". These play an important role in constructing the Straightening-Unstraightening functor (the $\infty$-categorical formalism of Grothendieck's construction of fibered categories). We shall also review the Cartesian model structure on marked simplicial sets which is needed for proving the Grothendieck's construction in $\infty$-categorical setting.

\subsection{Marked Simplicial Sets}
\begin{definition}\cite[Definition 3.1.0.1]{HTT}
A \textit{marked simplicial set} is a pair $(X,\E)$ where $X$ is a simplicial set and $\E$ is a set of edges $X$ which contains every degenerate edge. We will say that an edge of $X$ will be called marked if it belongs to $\E$.\\

A morphism $f : (X,\E) \to (X',\E')$ of marked simplicial sets is a map of simplicial sets $f: X \to X'$ having the property that $f(\E) \subset \E'$, The category of marked simplicial sets will be denoted by $\op{Set}^{+}_{\Delta}$.
\end{definition}
Below are the typical notations used in the category of marked simplicial sets.
\begin{notation}
\begin{enumerate}
    \item Let $S$ be simplicial set. Then let $S^{\flat} $ denote the marked simplicial set $S,s_0(S_0))$. Thus $S^{\flat}$ is the simplicial set $S$ with the markings containing only the degenerate edges of $S$.
    \item For a simplicial set $S$ let $S^{\sharp}$ denote the marked simplicial set $ (S,S_1)$, Thus $S^{\sharp}$ is the marked simplicial set where every edge is marked. 
    \item For a Cartesian fibration $p: X \to S$, let $X^{\natural}$ denote the marked simplicial set $(X,\E)$ where $\E$ is the set of $p$-Cartesian edges.
\end{enumerate}
\end{notation}

\begin{remark}
For an $\infty$-category $\Ca$, the unique map $p_{\Ca}:\Ca \to \Delta_0$ is a Cartesian fibration where the $p_C$-Cartesian edges are the equivalences in $\Ca$. Thus $\Ca^{\natural}$ is the marked simplical set $(\Ca,\E_{\Ca})$ where $\E_{\Ca}$ is the set of equivalences in $\Ca$.  
\end{remark}

The category of simplicial sets is Cartesian closed (\cite[Section 3.1.3]{HTT}). In other words, it has an internal mapping object which is defined as follows:

\begin{definition}
    Let $X,Y \in \op{Set}^{+}_{\Delta}$, then there exists an internal mapping object $Y^X \in \sset^+$ equipped with an "evaluation map" (a morphism of marked simplicial sets) \[ e_{X,Y}:Y^X \times X \to Y\] such that for every $Z \in \sset^+$, the map $e_{X,Y}$ induces bijections
    \[  \op{Hom}_{\sset^+}(Z,Y^X) \xrightarrow{e_{X,Y}} \op{Hom}_{\sset^+}(Z \times X,Y).\]
Given the internal mapping object $Y^X$, we can associate two simplicial sets $\op{Map}^{\flat}(X,Y)$ and $\op{Map}^{\sharp}(X,Y)$ which are defined follows:
\[ \op{Map}^{\flat}(X,Y)_n = \op{Hom}_{\sset^+}((\Delta^n)^{\flat} \times X, Y)\]
\[ \op{Map}^{\sharp}(X,Y)_n = \op{Hom}_{\sset^+}((\Delta^n)^{\sharp} \times X ,Y).\]
\end{definition}

\begin{remark}
    Some remarks on the internal mapping object are as follows:
    \begin{enumerate}
        \item By definition, we see that $\op{Map}^{\sharp}(X,Y) \subset \op{Map}^{\flat}(X,Y)$. 
        \item \label{markedsimpliskancomplex} For a marked simplicial set $X_{\E}=(X,\E)$ and $\infty$-category $\Ca$, $\op{Map}^{\flat}(X_{\E},\Ca^{\natural})$ is the full subcategory of the functor category $\op{Fun}(X,\Ca)$ spanned by objects $f: X \to \Ca$ which sends $\E_X$ to equivalences in $\Ca$.  Hence $\op{Map}^{\flat}(X_{\E},\Ca^{\natural})$ is an $\infty$-category.
        \item For a marked simplicial set $X_{\E}=(X,\E)$ and an $\infty$-category $\Ca$, the simplicial set $\op{Map}^{\sharp}(X,\Ca^{\natural})$ is the largest Kan complex contained in $\op{Map}^{\flat}(X_{\E},\Ca^{\natural})$. In case $X_{\E}= X^{\flat}$, then $\op{Map}^{\flat}(X^{\flat},\Ca^{\natural})$ is the largest Kan complex contained inside the $\infty$-categorry $\op{Fun}(X,\Ca)$.
        \item We have a pair of adjoint functors  : 
        \begin{equation}\label{flatforgetul} (-)^{\flat}:\sset \leftrightarrows \sset^+ : \op{forgetful} 
        \end{equation}
    \end{enumerate}
\end{remark}

\subsection{Cartesian Model Structure}
The category of marked simplicial sets admits a model structure called the Cartesian Model structure. 

\begin{definition}\cite[Proposition 3.1.3.7]{HTT}
    There exists a left proper combinatorial model structure on the category of marked simplicial sets $\sset^+$ called the \textit{Cartesian Model Structure} which is described as follows:
    \begin{enumerate}
        \item The cofibrations in $\sset^+$ are the morphisms $f : (X,\E_X) \to (Y,\E_Y)$ which are cofibrations when regarded as morphism of the underlying simplicial sets (i.e. monomorphisms).
        \item The weak equivalences are morphisms $f:X \to Y$ which are \textit{Cartesian equivalences} i.e. for every $\infty$-category $\Ca$ the natural morphism
        \begin{equation}
            \op{Map}^{\flat}(Y,\Ca^{\natural}) \to \op{Map}^{\flat}(X,\Ca^{\natural})
        \end{equation}
        is an equivalence of $\infty$-categories.
        \item The fibrations are those morphisms which have right lifting property with respect to every map which is a cofibration and a Cartesian equivalence.
    \end{enumerate}
\end{definition}
\begin{remark}\label{fibrantobjectsincartmodel}
    \item In the Cartesian Model structure, the fibrant objects are $\Ca^{\natural}$ where $\Ca$ is an $\infty$-category (\cite[Proposition 3.1.4.1]{HTT}).
\end{remark}
We have the notion of anodyne morphisms in category of simplicial sets which are exactly the trivial cofibrations in Kan model structure. In the context of marked simplicial sets, we have the notion of marked anodyne.
\begin{definition}\cite[Definition 3.1.1.1]{HTT}
    The class of \textit{marked anodyne morphisms} in $\sset^+$ is the smallest weakly saturated class of morphisms with the following properties:
    \begin{enumerate}
        \item For each $ 0<i<n$, the morphism $(\Lambda^n_i)^{\flat} \hookrightarrow (\Delta^n)^{\flat}$ is a marked anodyne.
        \item Let $\E$ denote the set of degenerate edges of $\Delta^n$ together with the final edge $\Delta^{\{n-1,n\}}$. Then for $n >0$, the morphism 
        \begin{equation}
            (\Lambda^n_n,((\Lambda^n_n)_1 \cap \E) \to (\Delta^n,\E)
        \end{equation}
        is a marked anodyne.
        \item The inclusion 
        \begin{equation}
            (\Lambda^2_1)^{\sharp} \coprod_{(\Lambda^2_1)^{\flat}} (\Delta^2)^{\flat} \to (\Delta^2)^{\sharp}
            \end{equation}
            is a marked anodyne.
        \item For every Kan complex $K$, the morphism $K^{\flat} \to K^{\sharp}$ is a marked anodyne.
    \end{enumerate}
\end{definition}

\begin{example}\label{markedanodyneexample}
    Marked anodynes are Cartesian equivalences (\cite[Remark 3.1.3.]{HTT}).
\end{example}

The following proposition describes the morphisms in $\sset^+$ which admits right lifting property along marked anodyne.
\begin{proposition}\cite[Proposition 3.1.1.6]{HTT}
A map $p: X \to S$ in $\sset^+$ admits right lifting property along marked anodyne morphisms iff the following conditions are satisfied :
\begin{enumerate}
    \item The map $p$ is an inner fibration of simplicial sets.
    \item An edge of $e$ is marked iff $p(e)$ is marked and $e$ is $p$-Cartesian.
    \item For every object $ y \in X$ and a marked edge $ \bar{e}:\bar{x} \to p(y)$ in $S$, there exists a marked edge $ e: x \to y$ such that $p(e)=\bar{e}$.
\end{enumerate}
In particular, $p:X \to S$ which are Cartesian fibrations and the marked edges of X are $p$-Cartesian edges satisfy these conditions above. 
\end{proposition}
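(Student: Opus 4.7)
The plan is to establish both directions of the biconditional by matching each of the four generating classes of marked anodyne morphisms against the three conditions, and then to deduce the ``in particular'' clause from the characterization. Since marked anodynes are the smallest weakly saturated class containing the four generating types, it suffices throughout to verify the lifting property against these generators.

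For the forward implication, I would go class by class. Right lifting property against class (1), the inclusions $(\Lambda^n_i)^{\flat} \hookrightarrow (\Delta^n)^{\flat}$ for $0 < i < n$, is by definition the condition that $p$ be an inner fibration of the underlying simplicial sets, giving condition (1). Specializing class (2) to $n = 1$ yields an inclusion whose lifting property states precisely that for every vertex $y \in X$ and marked edge $\bar e: \bar x \to p(y)$ in $S$ there is a marked edge $e: x \to y$ in $X$ over $\bar e$, which is condition (3). For condition (2), one implication is trivial: if $e$ is marked in $X$ then $p(e)$ is marked in $S$ because $p$ is a morphism of marked simplicial sets. That a marked edge is also $p$-Cartesian follows from the lifting property against class (2) for $n \geq 2$, which is literally the defining lifting property of a $p$-Cartesian edge applied to the distinguished final edge $\Delta^{\{n-1,n\}}$. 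The reverse half of condition (2) — that a $p$-Cartesian edge over a marked edge is itself marked — I would obtain from the lifting property against class (3), applied to a suitable degenerate 2-simplex whose one known marked side (necessarily the degenerate one) and whose $p$-Cartesian-over-marked side force the third edge to be marked.

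For the reverse implication, I would again dispatch the four classes in turn. Class (1) follows from condition (1). Class (2) for $n = 1$ is condition (3). For class (2) with $n \geq 2$, the given final edge of $\Lambda^n_n \to X$ is marked in $X$, hence $p$-Cartesian by condition (2), so the definition of $p$-Cartesian edge supplies the required lift $\Delta^n \to X$. For class (3), given a 2-simplex $\sigma: \Delta^2 \to X$ whose edges $\sigma_{01}, \sigma_{12}$ are marked and whose image $p(\sigma)$ has all three edges marked, condition (2) makes $\sigma_{01}$ and $\sigma_{12}$ into $p$-Cartesian edges; by the standard closure of $p$-Cartesian edges under composition in an inner fibration, $\sigma_{02}$ is $p$-Cartesian as well, and since $p(\sigma_{02})$ is marked, condition (2) forces $\sigma_{02}$ to be marked.

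The main obstacle is class (4): given a Kan complex $K$ and a compatible pair $K^{\flat} \to X$ and $K^{\sharp} \to S$, one must show every edge of $K$ lands in a marked edge of $X$. The key point is that every edge of a Kan complex is an equivalence, and its image in $X$ is therefore an edge which becomes invertible in the homotopy category; a standard lemma for inner fibrations (equivalences are $p$-Cartesian) then makes the image $p$-Cartesian, and since it lies over a marked edge of $S$, condition (2) forces it to be marked in $X$. Finally, the ``in particular'' assertion is immediate: if $p$ is a Cartesian fibration and the markings on $X$ are chosen to be the $p$-Cartesian edges over marked edges of $S$, then condition (1) holds because Cartesian fibrations are inner fibrations, condition (2) holds by construction of the markings, and condition (3) follows from the existence of Cartesian lifts provided by the Cartesian fibration property.
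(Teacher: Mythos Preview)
The paper does not supply a proof of this proposition at all: it is quoted verbatim from \cite[Proposition~3.1.1.6]{HTT} as background material, with no argument given. There is therefore nothing in the paper to compare your proposal against.

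For what it is worth, your outline is essentially Lurie's own argument in \cite{HTT}: reduce to the four generating families of marked anodynes and match them one by one against conditions (1)--(3). The one step that deserves a little more care is class~(4). You assert that the image in $X$ of an edge of the Kan complex $K$ ``becomes invertible in the homotopy category''; this is correct, but only because the $2$-simplices in $K$ witnessing a two-sided inverse are carried into $X$ by the given map --- it is not automatic that a functor out of a Kan complex lands in equivalences. Having produced such a two-sided homotopy inverse in $X$, one still needs the (genuine, though standard) lemma that for an inner fibration any edge admitting a two-sided homotopy inverse is $p$-Cartesian; you invoke this as ``a standard lemma'', and it is, but it is where the actual work in this step lies. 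With that point made explicit, your sketch is sound.
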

An important of marked anodyne morphisms are the fact that they are stable under smash products along arbitrary cofibrations.
\begin{proposition}\cite[Proposition 3.1.2.3]{HTT}
    If $ f: X \to X'$ is a marked anodyne and $g: Y \to Y'$ is a cofibration in $\sset^+$, then the pushout product 
    \begin{equation}
        f~\square~g : X \times Y' \coprod_{X \times Y} X' \times Y \to X' \times Y'
    \end{equation}
    is a marked anodyne.
\end{proposition}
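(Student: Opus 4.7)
The plan is to exploit the definitional fact that marked anodynes form a weakly saturated class, together with the standard principle that the pushout product bifunctor $\square$ interacts well with weak saturation in each variable. The argument then reduces to a finite case check on generators.

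First I would fix a cofibration $g \colon Y \to Y'$ in $\sset^+$ and consider
$$\mathcal{M}_g \;=\; \{\, h \in \op{Mor}(\sset^+) \mid h \,\square\, g \text{ is marked anodyne}\,\}.$$
Since $(-) \,\square\, (-)$ commutes with colimits in each variable and preserves retracts, $\mathcal{M}_g$ is itself weakly saturated; thus to see that $\mathcal{M}_g$ contains every marked anodyne morphism, it suffices to check that it contains each of the four generating families (i)--(iv) of marked anodynes from the definition. Symmetrically, for each fixed generating marked anodyne $f$, the class
$$\mathcal{N}_f \;=\; \{\, g \in \op{Cof}(\sset^+) \mid f \,\square\, g \text{ is marked anodyne}\,\}$$
is weakly saturated, so one need only test $f$ against a generating set of cofibrations in $\sset^+$. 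A convenient such set consists of the flat boundary inclusions $(\partial \Delta^n)^{\flat} \hookrightarrow (\Delta^n)^{\flat}$ for $n \geq 0$, together with the single marking-adding map $(\Delta^1)^{\flat} \hookrightarrow (\Delta^1)^{\sharp}$; every cofibration in $\sset^+$ decomposes as a transfinite composition of pushouts of these.

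After these two reductions the problem becomes a finite case analysis. The pair ((i), $(\partial \Delta^m)^{\flat} \hookrightarrow (\Delta^m)^{\flat}$) reduces to the classical fact that inner anodynes in $\sset$ are stable under pushout product with monomorphisms; since all objects involved are flatly marked, the result is a transfinite composition of pushouts of generators of type (i). The pair ((ii), flat boundary inclusion) is analogous, but requires careful bookkeeping of marked edges to match the target marking demanded by generator (ii). The pair ((iv), flat boundary inclusion) is essentially trivial: for $m \geq 1$ all vertices of $\Delta^m$ lie in $\partial \Delta^m$, so the pushout product is an isomorphism of marked simplicial sets; for $m = 0$ it is generator (iv) itself. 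The remaining cases --- namely generator (iii) against both generating cofibrations, and generators (i), (ii), (iv) against the marking cofibration $(\Delta^1)^{\flat} \hookrightarrow (\Delta^1)^{\sharp}$ --- require exhibiting an explicit filtration of $f \,\square\, g$ whose successive stages are pushouts of the four generators.

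The main obstacle is this final combinatorial step, particularly the pairing of generator (iv) with $(\Delta^1)^{\flat} \hookrightarrow (\Delta^1)^{\sharp}$. Here the pushout product is the identity on the underlying simplicial set $K \times \Delta^1$ and adds markings precisely to the ``diagonal'' edges $(e_K, e_{01})$; since $K \times \Delta^1$ is not a Kan complex, one cannot directly present this as an instance of (iv), and instead must exploit the Kan condition on $K$ to produce $2$-simplices exhibiting each diagonal edge as an equivalence, then mark them using generator (iii). Similarly, for generator (iii) smashed against $(\partial \Delta^m)^{\flat} \hookrightarrow (\Delta^m)^{\flat}$, one must organize the non-degenerate $2$-simplices of $\Delta^2 \times \Delta^m$ into a filtration whose successive subquotients are pushouts of (iii) (with (i) used to fill the remaining inner horns). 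Once these explicit base cases are established, the two-fold weak saturation reduction of the first paragraphs assembles them into the full proposition.
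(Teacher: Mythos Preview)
The paper does not supply its own proof of this proposition: it is recorded with a citation to \cite[Proposition 3.1.2.3]{HTT} as part of the background on the Cartesian model structure, without argument. There is therefore nothing in the paper itself to compare your argument against.

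Your plan is nonetheless the standard one, and it matches Lurie's strategy in HTT: reduce by weak saturation in each variable of $\square$ to checking the four generating families of marked anodynes against the generating cofibrations $(\partial\Delta^m)^{\flat} \hookrightarrow (\Delta^m)^{\flat}$ and $(\Delta^1)^{\flat} \hookrightarrow (\Delta^1)^{\sharp}$ of $\sset^+$, and then carry out the resulting finite case analysis. One simplification is available in the case you flagged as the main obstacle, namely generator (iv) against $(\Delta^1)^{\flat} \hookrightarrow (\Delta^1)^{\sharp}$. You do not actually need the Kan condition on $K$ to mark the diagonals: for a non-degenerate edge $e \colon a \to b$ of $K$, the $2$-simplex $(s_1 e,\, s_0(0{\to}1))$ of $K \times \Delta^1$ already has its $0{\to}1$ edge $(e, \op{id}_0)$ and its $1{\to}2$ edge $(\op{id}_b, 0{\to}1)$ marked in the domain of the pushout product, so a single pushout along generator (iii) suffices to mark the diagonal $0{\to}2$ edge $(e, 0{\to}1)$.
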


The following proposition gives a Quillen adjunction between model structures on simplicial sets and marked simplicial sets.

\begin{proposition}\label{quillenadjunctionsharpmap}
    The pair of adjoint functors :
\begin{equation}
     (-)^{\sharp} : \sset \leftrightarrows \sset^+ : G
\end{equation}
where  $G(Y,\E_Y)= Y_{\E_Y}$ the subsimplicial set of $Y$ spanned by edges in $\E_Y$ is  a Quillen adjunction between the Kan Model structure on $\sset$ and the Cartesian Model structue on $\sset^+$.
\end{proposition}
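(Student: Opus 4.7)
The plan is to verify the three standard ingredients of a Quillen adjunction: the adjunction itself, preservation of cofibrations by the left adjoint $(-)^{\sharp}$, and preservation of weak equivalences (equivalently, of trivial cofibrations) by $(-)^{\sharp}$.

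First I would verify the adjunction by unpacking the universal property of the sharp marking: a morphism $X^{\sharp} \to (Y, \E_Y)$ in $\sset^+$ is a simplicial map $X \to Y$ carrying every edge of $X$ into $\E_Y$, which is the same data as a map $X \to Y_{\E_Y} = G(Y, \E_Y)$. Naturality in $X$ and $(Y, \E_Y)$ is clear; the unit $X \to G(X^{\sharp}) = X$ is the identity. Preservation of cofibrations is then immediate, since cofibrations in both model structures coincide with monomorphisms of the underlying simplicial sets, and $(-)^{\sharp}$ does not change the underlying simplicial set.

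For the main step I would show that $(-)^{\sharp}$ sends Kan weak equivalences to Cartesian equivalences. Here I would use the standard fact that, the Cartesian model structure being a simplicial model category with simplicial enrichment $\op{Map}^{\sharp}$ (cf.\ \cite[Proposition 3.1.4.4]{HTT}), a morphism $g: A \to B$ in $\sset^+$ is a Cartesian equivalence if and only if $\op{Map}^{\sharp}(B, \Ca^{\natural}) \to \op{Map}^{\sharp}(A, \Ca^{\natural})$ is a weak equivalence of Kan complexes for every $\infty$-category $\Ca$. Applying this to $g = f^{\sharp}$ for a Kan weak equivalence $f: X \to Y$ and using that $(\Delta^n)^{\sharp} \times X^{\sharp} = (\Delta^n \times X)^{\sharp}$, the adjunction yields a natural isomorphism of simplicial sets
\[
    \op{Map}^{\sharp}(X^{\sharp}, \Ca^{\natural}) \cong \op{Map}(X, G(\Ca^{\natural})) = \op{Map}(X, \Ca^{\simeq}),
\]
with $\Ca^{\simeq}$ a Kan complex. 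Since $f$ is a Kan weak equivalence and $\Ca^{\simeq}$ is Kan-fibrant, $\op{Map}(Y, \Ca^{\simeq}) \to \op{Map}(X, \Ca^{\simeq})$ is a weak equivalence of Kan complexes, proving $f^{\sharp}$ is a Cartesian equivalence.

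The main obstacle is the invocation of the $\op{Map}^{\sharp}$-characterization of Cartesian equivalences rather than the $\op{Map}^{\flat}$-definition recalled in the excerpt, since the latter gives only an equivalence of $\infty$-categories and not directly the weak-equivalence-of-Kan-complexes statement we need. An alternative route, bypassing this black box, would reduce to showing each generating trivial cofibration $(\Lambda^n_i)^{\sharp} \hookrightarrow (\Delta^n)^{\sharp}$ is marked anodyne (hence a Cartesian equivalence by \cref{markedanodyneexample}); the inner-horn case ($0 < i < n$ with $n \geq 2$) follows cleanly by pushing out the type (1) marked anodyne $(\Lambda^n_i)^{\flat} \hookrightarrow (\Delta^n)^{\flat}$ along $(\Lambda^n_i)^{\flat} \hookrightarrow (\Lambda^n_i)^{\sharp}$, using that $\Lambda^n_i$ already contains every $1$-simplex of $\Delta^n$, but the outer-horn cases are more delicate and require combinations of marked anodyne generators of types (2), (3), (4) together with $2$-out-of-$3$ arguments for Cartesian equivalences.
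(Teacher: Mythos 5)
Your proof is correct, and for the essential step it takes a genuinely different (and in fact cleaner) route than the paper. The adjunction and the cofibration step coincide with the paper's. For trivial cofibrations the paper argues directly with the $\op{Map}^{\flat}$-definition of Cartesian equivalences: for an anodyne $p: X \to Y$ it shows that $\op{Map}^{\flat}(Y^{\sharp},\Ca^{\natural}) \to \op{Map}^{\flat}(X^{\sharp},\Ca^{\natural})$ is a trivial Kan fibration by unwinding the lifting problem against $\partial\Delta^n \subset \Delta^n$ into an extension problem
\[
Y^{\sharp}\times (\partial\Delta^n)^{\flat} \coprod_{X^{\sharp}\times (\partial\Delta^n)^{\flat}} X^{\sharp}\times (\Delta^n)^{\flat} \longrightarrow \Ca^{\natural},
\]
reducing it to a map of underlying simplicial sets into the largest Kan complex $K \subseteq \Ca$, and then applying the classical pushout-product property of anodynes. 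You instead invoke the $\op{Map}^{\sharp}$-characterization of Cartesian equivalences and the adjunction identity $\op{Map}^{\sharp}(X^{\sharp},\Ca^{\natural}) \cong \op{Fun}(X,K)$, which immediately reduces everything to the classical fact that mapping into a Kan complex turns weak equivalences of simplicial sets into homotopy equivalences. Both arguments therefore rest on the same classical input, but yours proves the stronger statement that $(-)^{\sharp}$ preserves \emph{all} weak equivalences, and it makes precise the reduction to $K$ that the paper states somewhat loosely (a map $Y^{\sharp}\times(\Delta^n)^{\flat} \to \Ca^{\natural}$ does not literally factor through $K$ on underlying simplicial sets; only the $\sharp$-marked mapping space does). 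Your worry about the $\op{Map}^{\sharp}$-characterization being a black box is unwarranted: it is one of the standard equivalent descriptions of Cartesian equivalences established in \cite[Section 3.1.3]{HTT}, so your main route is complete and the more delicate alternative via marked-anodyne decompositions of $(\Lambda^n_i)^{\sharp} \hookrightarrow (\Delta^n)^{\sharp}$ is not needed.
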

\begin{proof}
At first, we have the bijection of Hom sets
\begin{equation}
    \op{Hom}_{\sset^+}(X^{\sharp},(Y,\E_Y)) \cong \op{Hom}_{\sset}(X,Y_{\E_Y}).
\end{equation}
This is true because given a morphism $X \to Y_{\E_Y}$, it induces a morphism $X^{\sharp} 
\to (Y,\E_Y)$.\\
In order to show it is a Quillen adjunction, we show that $(-)^{\sharp}$ sends cofibrations to cofibrations and trivial cofibrations to trivial cofibrations. \\
Firstly, $(-)^{\sharp}$ sends cofibrations to cofibrations. This is true because the cofibrations are defined as cofibrations in the underlying map of simplicial sets. Thus we need to show that $p^{\sharp}: X^{\sharp} \to Y^{\sharp}$ sends anodyne morphisms to Cartesian equivalences (they are already cofibrations). In other words, we need to show that for an $\infty$-category $\Ca$, we need to show
\begin{equation}
    \op{Map}^{\flat}(Y^{\sharp},\Ca^{\natural}) \to \op{Map}^{\flat}(X^{\sharp},\Ca^{\natural})
\end{equation}
is a trivial Kan fibration. Unravelling the definition of Kan fibration, we need to show the existence of the dotted arrow:
\begin{equation}
    \begin{tikzcd}
        Y^{\sharp}\times (\partial\Delta^n)^{\flat} \coprod_{X^{\sharp}\times (\partial\Delta^n)^{\flat}} X^{\sharp}\times (\Delta^n)^{\flat} \arrow[d,hookrightarrow,"i"] \arrow[r,"\alpha"] & \Ca^{\natural} \\
        Y^{\sharp}\times(\Delta^n)^{\flat} \arrow[ur,dotted,"\alpha'"]& {}.
    \end{tikzcd}
\end{equation}
Let $K$ be the largest Kan complex inside $\Ca$. Then the morphism of $\alpha$ on the level of simplicial sets can be rewritten as 
\begin{equation}
                Y \times \partial\Delta^n \coprod_{X\times \partial\Delta^n} X\times \Delta^n\xrightarrow{\alpha}  K
\end{equation}

As the pushout product of an anodyne and a monomorphism of simplicial sets is anodyne, it admits lifting along the morphism $K \to \Delta^0$. Thus we have an extension
\begin{equation}
    Y \times \Delta^n \xrightarrow{\alpha'} K.
\end{equation}
Rewritting in terms of marked simplicial sets, we get a morphism \begin{equation}
    \alpha': Y^{\sharp}\times(\Delta^n)^{\flat} \to \Ca^{\natural}
\end{equation}
extending $\alpha$.\\
This completes the proof of the proposition.
\end{proof}
\section{Functors from  the Category of simplices}

In this section, we introduce the main tools needed for stating the main statement. Firstly, we introduce the Model structure on the category of functors from a small category $\J$ to $\sset$. Then we introduce the category of simplicies $\Delta_{/K}$ associated to a simplicial set $K$ and introduce the functor $\op{Map}[K,\Ca]$. The final goal of this section is to prove that functor $\op{Map}[K,\Ca]$ is fibrant in the model structure $(\sset)^{\Delta/K}$. We are mainly referring to  \cite[Section 2]{Gluerestnerv}

\subsection{Properties and model structure on $(\sset)^{\J}$.}

\begin{definition}
    Let $\J$ be a (small) ordinary category. Let $(\sset)^{\J}$ be the category where objects are functors from $\J \to \sset$ and morphisms are natural transformations.
\end{definition}

\begin{definition}
    As the Quillen model structure on $\sset$ is combinatorial and $\J$ is a small category, by \cite[Proposition A.2.8.2]{HTT}, there exists an  injective model structure on $(\sset)^{\J}$ which is defined by the following classes of morphisms:
    \begin{enumerate}
        \item A morphism $i: F \to G$ is called \textit{anodyne} if for every object $j \in J$, the morphism of simplicial sets $i(j): F(j) \to G(j)$ is anodyne in $\sset$.
        \item A morphism $i: F \to G$ is called a \textit{weak equivalence} if for every object $j \in J$, the morphism of simplicial sets $i(j) : F(j) \to G(j)$ is a weak equivalence in $\sset$.
        \item A morphism $i: F \to G$ is said to be \textit{injective fibration} if it has right lifting property with respect to anodyne morphisms. 
    \end{enumerate}
\end{definition}

\begin{notation}
    For every simplicial set $X$, we have the constant simplicial set functor $c(X):=X_{\J}$ defined by sending any object $j$ to the simplicial set $X$. The association is functorial and thus we have a functor :
     \[ c: \sset \to (\sset)^{\J}\]
    
\end{notation}

\begin{definition}\label{globalsectionfunctor}
We define the \textit{global section functor} 
\[ \Gamma : (\sset)^{\J} \to \sset\]
as follows : 
\[ \Gamma(F) =(\Gamma(F)_n := \op{Hom}_{(\sset)^{\J})}(\Delta^n_{\J},F))_n.\]
\end{definition}
\begin{example}
Let $F$ be the constant functor $c(X)$ where $X \in \sset$. Let us compute $\Gamma(F)$.
The $n$-simplices of $\Gamma(F)$ are given by the set of natural transformations from $\Delta^n_J \to c(X)$. Every such natural transformation is equivalent to give a single map $\Delta^n \to X$. In particular the $n$-simplices of $\Gamma(F)$ are given by $n$-simplices of $X$.Thus $\Gamma(F)= X$. In particular, we prove that $\Gamma \circ c = \op{id}_{\sset}$.

\end{example}
\begin{remark}
Recall from classical category theory, given a complete category $\Ca$ and an small category $I$, we have the pair of adjoint functors: 
 \[ c: \Ca \leftrightarrows \op{Fun}(I,\Ca) : \op{lim}\]
 where $\op{lim}$ is the functor which takes an object which is a functor $F : I \to \Ca$ to its limit $\op{lim}(F) \in \Ca$. \\

 Let $\Ca= \sset$ and $I=\J$. As the category of simplicial sets is complete,  we see that 
 \[ \Gamma = \op{lim}. \]
 In the other words, the global section functor is the limit functor which takes every functor to its limit in the category of simplicial sets.
 
\end{remark}

\begin{proposition}
The pair of functors: 
\[ c: (\sset)^{\J} \leftrightarrows \sset : \Gamma \]
is a Quillen adjunction with the injective model structure on $(\sset)^{\J}$ and the Quillen model structure on $\sset$.
\end{proposition}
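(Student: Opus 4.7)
The plan is to reduce the proposition to two elementary ingredients: first, that $(c,\Gamma)$ forms an adjoint pair in the ordinary categorical sense, and second, that the left adjoint $c$ carries (trivial) cofibrations of the Quillen model structure on $\sset$ to (trivial) cofibrations of the injective model structure on $(\sset)^{\J}$. By the standard characterization of a Quillen adjunction (preservation of cofibrations and trivial cofibrations by the left adjoint), these two facts together give the claim.

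For the adjunction itself, I would invoke the remark immediately preceding the proposition, which identifies $\Gamma$ with the limit functor $\op{lim}_{\J}: (\sset)^{\J} \to \sset$. The desired natural bijection $\op{Hom}_{(\sset)^{\J}}(c(X),F) \cong \op{Hom}_{\sset}(X,\Gamma(F))$ is then just the universal property of the limit: a natural transformation from the constant diagram $c(X)$ to $F$ is the same data as a cone over $F$ with apex $X$, which is the same as a morphism $X \to \op{lim}_{\J} F$. Naturality in both variables is automatic.

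For the model-categorical half, the key observation is that in the injective model structure on $(\sset)^{\J}$ both the cofibrations (objectwise monomorphisms) and the trivial cofibrations (the ``anodyne'' morphisms of the excerpt, i.e.\ objectwise anodynes) are detected \emph{objectwise}. Given any (trivial) cofibration $p:X \to Y$ in $\sset$, the natural transformation $c(p):c(X) \to c(Y)$ is literally the map $p$ in each component $j \in \J$; hence it is objectwise a (trivial) cofibration, and therefore a (trivial) cofibration in $(\sset)^{\J}$.

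I do not anticipate a real obstacle: the argument is entirely formal once one unwinds the definitions. The only point meriting mild care is that the injective model structure really does have the objectwise monomorphisms as cofibrations (the excerpt only lists anodynes, weak equivalences, and fibrations explicitly), but this is part of the content of the construction cited from \cite[Proposition A.2.8.2]{HTT} when the model structure was introduced.
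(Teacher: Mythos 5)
Your proposal is correct and follows essentially the same route as the paper: take the adjunction $c \dashv \Gamma$ as established by the universal property of limits, then check that $c$ preserves cofibrations and trivial cofibrations, which is immediate because both classes in the injective model structure are detected objectwise and $c(p)$ is the map $p$ in every component. The only difference is that you spell out the adjunction and the objectwise-cofibration point slightly more explicitly than the paper does, which is harmless.
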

\begin{proof}
We already know that $\Gamma$ and $c$ are adjoint to each other. In order to show it is a Quillen adjunction,we need to show that $c$ preserves cofibrations and trivial cofibrations. By definition $c$ takes an anodyne morphism $X \to S$ to $X_{\J} \to S_{\J}$ which is an anodyne in the injective model structure (as pointwise it is the anodyne morphism $X \to S$). It also preserves cofibrations in the similar argument.
\end{proof}
\begin{remark}\label{commutativityofglobalsecunderpullback}
    Let $g: \J \to \J'$ be a morphism of simplicial sets. This induces a pullback functor:
    on the level of functor categories:
    \[ g^* : (\sset)^{\J'} \to (\sset)^{\J}. \]
    Thus for any element $\M \in (\sset)^{\J'}$, we have a functor 
    \[ (-) \circ g:\Gamma(\M) \to \Gamma(g^*\M). \]
    Also for any morphism $ \psi: \N \to \M$ in $(\sset)^{\J'}$, we have the following commutative diagram of simplicial sets: 
    \begin{equation}
        \begin{tikzcd}
            \Gamma(\N) \arrow[r,"\Gamma(\psi)"] \arrow[d," (-) \circ g"] & \Gamma(\M) \arrow[d," (-)\circ g"] \\
            \Gamma(g^*\N) \arrow[r,"\Gamma(g^*\psi)",swap] & \Gamma(g^*\M).
        \end{tikzcd}
    \end{equation}
\end{remark}

\subsection{The category of simplices.}

\begin{definition}\label{catofsimpldef}
    Let $K$ be a simplicial set. Then the \textit{category of simplicies over $K$} is a category consisting of :
    \begin{enumerate}
        \item Objects : $(n,\sigma)$ where $n \ge 0$ and $\sigma \in K_n$.
        \item Morphisms: $ p:(n,\sigma)\to (m,\sigma')$ is a morphism $p: [n] \to [m]$ such that $p(\sigma)=\sigma'$.
    \end{enumerate}
\end{definition}

\begin{remark}
    The definition of the category of simplicies is functorial i.e. a morphism of simplicial sets $K \to K'$ induces a map of category of simplices \[ g^*: \Delta_{/K} \to \Delta_{/K'}. \]

\end{remark}

\begin{lemma}\label{colimitofcatofsimplices}
Let $K$ be a simplicial set. Then the colimit of the functor  
\[ \Delta_{/K} \to \sset\] given by \[ (n,\sigma) \to \Delta^n\] is given by $\Delta^n$.
\end{lemma}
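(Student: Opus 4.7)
The statement is the classical density (co-Yoneda) lemma, which says that any simplicial set $K$ is the colimit of its simplices; the target in the displayed formula should read $K$ rather than $\Delta^n$. The plan is to verify the universal property of the colimit directly, using the Yoneda lemma to identify $n$-simplices of an arbitrary simplicial set with morphisms out of $\Delta^n$.

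First, I would exhibit the canonical cocone. For each object $(n,\sigma) \in \Delta_{/K}$, the simplex $\sigma \in K_n$ corresponds under Yoneda to a morphism $\widehat{\sigma} : \Delta^n \to K$. For a morphism $p : (n,\sigma) \to (m,\sigma')$ in $\Delta_{/K}$, the compatibility condition in Definition~\ref{catofsimpldef} forces the triangle
\begin{equation*}
\widehat{\sigma'} \circ \Delta^p = \widehat{\sigma}
\end{equation*}
to commute, so $\{\widehat{\sigma}\}_{(n,\sigma)}$ is a cocone from the diagram $(n,\sigma) \mapsto \Delta^n$ to $K$.

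Next, I would check that this cocone is universal. Given any simplicial set $X$, a cocone $\{f_{(n,\sigma)} : \Delta^n \to X\}$ corresponds under Yoneda to a family of $n$-simplices $x_\sigma := f_{(n,\sigma)} \in X_n$, one for each $\sigma \in K_n$, and the compatibility under morphisms $p : [n] \to [m]$ in $\Delta_{/K}$ translates exactly into the equations $p^*(x_{\sigma'}) = x_\sigma$. This is precisely the data of a morphism of simplicial sets $g : K \to X$ defined by $g_n(\sigma) = x_\sigma$, which is well-defined and unique because a map of simplicial sets is determined by (and can be freely prescribed from) such a compatible system. This provides a natural bijection
\begin{equation*}
\operatorname{Hom}_{\sset}(K,X) \;\xrightarrow{\;\sim\;}\; \lim_{(n,\sigma) \in (\Delta_{/K})^{op}} \operatorname{Hom}_{\sset}(\Delta^n,X),
\end{equation*}
which is exactly the universal property expressing $K$ as the colimit of the diagram.

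There is no real obstacle here; the only subtlety is unwinding the variance in the definition of $\Delta_{/K}$ so that the Yoneda correspondence matches the compatibility conditions of a cocone on the nose. Once that bookkeeping is done, the proof amounts to the observation that specifying a map $K \to X$ is, tautologically, the same as specifying an $n$-simplex of $X$ for every $n$-simplex of $K$ in a way compatible with faces and degeneracies.
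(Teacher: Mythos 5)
Your proof is correct and is the standard direct verification of the density (co-Yoneda) lemma via the Yoneda correspondence between cocones under the diagram and maps $K \to X$; the paper itself offers no argument beyond citing \cite[Lemma 3.1.3]{hoveymodel} for exactly this fact, so your write-up is simply a self-contained version of the same classical statement. You are also right that the conclusion of the lemma should read ``is given by $K$'' rather than ``$\Delta^n$''; that is a typo in the statement as printed.
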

\begin{proof}
    This follows from the fact that 
     $K$ is same as   $\op{colim}_{\sigma: \Delta^n \to K} \Delta^n$ (\cite[Lemma 3.1.3]{hoveymodel}). 
    
\end{proof}

\subsection{The mapping functor.}

\begin{definition}\cite[Notation 2.6]{Gluerestnerv}\label{mappingfunctor}
    Let $K$ be a simplicial set and $\Ca$ be a $\infty$-category. The \textit{mapping functor} \[ \op{Map}[K,\Ca] : (\Delta_{/K})^{op} \to \sset \] is defined as follows: 
    \[ (n,\sigma) \to \op{Map}^{\sharp}((\Delta^n)^{\flat},\Ca^{\natural}) \cong \op{Fun}^{\cong}(\Delta^n,\Ca). \]
 \end{definition}
\begin{remark}
    If $g: K \to K'$ a map of simplicial sets, then the pullback functor :

   \[ g^* : (\sset)^{(\Delta_{/K'})^{op}} \to (\sset)^{(\Delta_{/K})^{op}} \]
   maps $\op{Map}[K',\Ca] \to \op{Map}[K,\Ca]$.
\end{remark}

\begin{lemma}
    For a simplicial set $K$  and an $\infty$-category $\Ca$, we have the following equality of simplicial sets: 
     \[ \Gamma(\op{Map}[K,\Ca]) = \op{Map}^{\sharp}(K^{\flat},\Ca). \]
\end{lemma}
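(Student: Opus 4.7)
The plan is to compare $n$-simplices on both sides and identify them via a colimit-to-limit duality argument.

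First I would unwind definitions. On one hand, by the description of $\Gamma$ as a limit functor (the remark preceding the definition of $\Delta_{/K}$),
\[
\Gamma(\op{Map}[K,\Ca])_n \;=\; \lim_{(m,\sigma)\in(\Delta_{/K})^{op}} \op{Map}^{\sharp}((\Delta^m)^{\flat},\Ca^{\natural})_n \;=\; \lim_{(m,\sigma)} \op{Hom}_{\sset^+}\!\bigl((\Delta^n)^{\sharp}\times(\Delta^m)^{\flat},\Ca^{\natural}\bigr).
\]
On the other hand,
\[
\op{Map}^{\sharp}(K^{\flat},\Ca^{\natural})_n \;=\; \op{Hom}_{\sset^+}\!\bigl((\Delta^n)^{\sharp}\times K^{\flat},\,\Ca^{\natural}\bigr).
\]
Thus it suffices to exhibit a natural isomorphism $(\Delta^n)^{\sharp}\times K^{\flat}\cong \op{colim}_{(m,\sigma)}(\Delta^n)^{\sharp}\times(\Delta^m)^{\flat}$ in $\sset^+$ and then turn this colimit into the stated limit by applying $\op{Hom}_{\sset^+}(-,\Ca^{\natural})$.

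Next I would establish the colimit presentation. By \cref{colimitofcatofsimplices}, $K \cong \op{colim}_{(m,\sigma)\in\Delta_{/K}}\Delta^m$ in $\sset$. Since $(-)^{\flat}:\sset\to\sset^+$ is a left adjoint (see the adjunction \eqref{flatforgetul}), it preserves colimits, giving $K^{\flat}\cong \op{colim}_{(m,\sigma)}(\Delta^m)^{\flat}$ in $\sset^+$. Moreover, $\sset^+$ is Cartesian closed, so the functor $(\Delta^n)^{\sharp}\times(-)$ is a left adjoint and commutes with colimits, yielding the desired isomorphism
\[
(\Delta^n)^{\sharp}\times K^{\flat} \;\cong\; \op{colim}_{(m,\sigma)}\bigl((\Delta^n)^{\sharp}\times(\Delta^m)^{\flat}\bigr).
\]

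Finally I would apply $\op{Hom}_{\sset^+}(-,\Ca^{\natural})$, which takes colimits to limits, to obtain
\[
\op{Map}^{\sharp}(K^{\flat},\Ca^{\natural})_n \;\cong\; \lim_{(m,\sigma)} \op{Hom}_{\sset^+}\!\bigl((\Delta^n)^{\sharp}\times(\Delta^m)^{\flat},\Ca^{\natural}\bigr) \;=\; \Gamma(\op{Map}[K,\Ca])_n,
\]
and check that the identification is natural in $[n]\in\Delta^{op}$, which is automatic since every step is functorial. The main (mild) obstacle is being careful that the functor $(m,\sigma)\mapsto(\Delta^m)^{\flat}$ from $\Delta_{/K}$ to $\sset^+$ is precisely the one whose colimit recovers $K^{\flat}$ — i.e., that the colimit diagram produced by \cref{colimitofcatofsimplices} is exactly the one indexing $\op{Map}[K,\Ca]$ (up to the op); otherwise the argument is a straightforward Yoneda/adjointness manipulation.
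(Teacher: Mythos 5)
Your proposal is correct and follows essentially the same route as the paper: reduce to comparing $n$-simplices, present $K^{\flat}$ as $\op{colim}_{(m,\sigma)}(\Delta^m)^{\flat}$ via \cref{colimitofcatofsimplices} and the colimit-preservation of $(-)^{\flat}$, and convert the colimit to a limit by applying $\op{Hom}_{\sset^+}(-,\Ca^{\natural})$. The only (welcome) difference is that you make explicit the step the paper leaves implicit, namely that $(\Delta^n)^{\sharp}\times(-)$ commutes with colimits because $\sset^+$ is Cartesian closed.
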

\begin{proof}
    By definition of $\Gamma$ , the set of $m$-simplices of $\Gamma(\op{Map}[K,\Ca])$ is the limit of the functor : 
    \[ \Delta_{/K} \to \Set~~;~~ (n,\sigma) \to \op{Hom}_{\sset^+}((\Delta^m)^{\sharp}\times (\Delta^n)^{\flat},\Ca^{\natural}). \] On the other hand, the $m$-simplices of $\op{Map}^{\sharp}(K^{\flat},\Ca^{\natural})$ is the set \[\op{Hom}_{\sset^+}( (\Delta^m)^{\sharp} \times K^{\flat},\Ca^{\natural}).\] \\
 Thus we need to show that \[ \op{lim}_{(n,\sigma)} \op{Hom}_{\sset^+}((\Delta^m)^{\sharp}\times (\Delta^n)^{\flat},\Ca^{\natural})= \op{Hom}_{\sset^+}((\Delta^m)^{\sharp} \times \op{colim}_{(n,\sigma)} (\Delta^n)^{\flat},\Ca^{\natural})\] is same as the set $\op{Hom}_{\sset^+}( (\Delta^m)^{\sharp} \times K^{\flat},\Ca^{\natural}).$

 This reduces down to showing that the colimit of the functor 
 \[ \Delta_{/K} \to \sset^+ ~~ (n,\sigma) \to (\Delta^n)^{\flat}\] is given by $K^{\flat}$. 
 By \cref{flatforgetul}, we know that $(-)^{\flat}$ preserves colimits. Thus we need to show that the colimit of the functor
 \[ \Delta_{/K} \to \sset ~~ (n,\sigma) \to (\Delta^n)\]  is $K$. This is precisely \cref{colimitofcatofsimplices}.
\end{proof}
\begin{remark}
   By above lemma, we see that the  global section of the functor $\op{Map}[K,\Ca]$ is given by $\op{Map}^{\sharp}(K^{\flat},\Ca^{\natural})$ which is the largest Kan complex contained in $\op{Fun}(K,\Ca)$. \\
   Objects in the connected components of this Kan complex yield us functors $K \to \Ca$ which are equivalent to one another. Thus in order to construct a functor $K \to \Ca$ upto equivalence, it is same to give an object in the connected part of $\Gamma(\op{Map}[K,\Ca])$.
\end{remark}
The following proposition is a crucial property of the functor $\op{Map}[K,\Ca]$.
\begin{proposition} \label{mappingfunctorinjfibrant}
    Let $K$ be a simplicial set and $\Ca$ be an $\infty$-category, the functor $\op{Map}[K,\Ca]$ is injectively fibrant with respect to the injective model structure on $(\sset)^{(\Delta_{/K})^{op}}$. In other words for every commutative diagram in $(\sset)^{(\Delta_{/K})^{op}}$ 
    \begin{equation}
        \begin{tikzcd}
        \N \arrow[r,"\alpha"] \arrow[d,"i"] & \op{Map}[K,\Ca] \\
        \M \arrow[ur,dotted,"\beta"] & {} 
        \end{tikzcd}
    \end{equation}
    where $i: \N \to \M$ is an anodyne, there exists a dotted arrow $\beta$ making the diagram commutative.
\end{proposition}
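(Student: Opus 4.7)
The plan is to exhibit $\op{Map}[K,\Ca]$ as the value at $\Ca^{\natural}$ of a right Quillen functor from the Cartesian model structure on $\sset^+$ to the injective model structure on $(\sset)^{(\Delta_{/K})^{op}}$, and then conclude fibrancy from \cref{fibrantobjectsincartmodel}.

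First, I would package the formula $\op{Map}[K,\Ca](n,\sigma)=\op{Map}^{\sharp}((\Delta^n)^{\flat},\Ca^{\natural})$ into a functor $R\colon \sset^+\to(\sset)^{(\Delta_{/K})^{op}}$ defined on $Y\in\sset^+$ by $R(Y)(n,\sigma)=\op{Map}^{\sharp}((\Delta^n)^{\flat},Y)$, so that $R(\Ca^{\natural})=\op{Map}[K,\Ca]$ by construction. Using that for fixed $(n,\sigma)$ the functor $\op{Map}^{\sharp}((\Delta^n)^{\flat},-)$ is right adjoint to $S\mapsto S^{\sharp}\times(\Delta^n)^{\flat}$, assembling these pointwise adjoints naturally in $(n,\sigma)\in\Delta_{/K}$ produces a global left adjoint $L\colon(\sset)^{(\Delta_{/K})^{op}}\to\sset^+$ that can be written as the coend
\[ L(\N)=\int^{(n,\sigma)\in\Delta_{/K}}\N(n,\sigma)^{\sharp}\times(\Delta^n)^{\flat}. \]
By adjunction, the lifting problem in the proposition is equivalent to extending the transposed map $L(\N)\to\Ca^{\natural}$ along $L(i)\colon L(\N)\to L(\M)$ in $\sset^+$, and since $\Ca^{\natural}$ is fibrant in the Cartesian model structure by \cref{fibrantobjectsincartmodel}, it suffices to prove that $L(i)$ is a trivial cofibration there.

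Second, I would verify that $L$ is left Quillen by filtering along the skeleta of $K$. The underlying simplicial set of $L(\N)$ is $\op{colim}_{(n,\sigma)\in\Delta_{/K}}\N(n,\sigma)\times\Delta^n$, which for $\N=c(\Delta^0)$ reduces to $K$ by \cref{colimitofcatofsimplices}. Filtering $\Delta_{/K}$ by the dimension of the underlying nondegenerate simplex, one expresses $L(\M)$ as a transfinite composition built from $L(\N)$ by iterated pushouts along pushout-product maps
\[ \N(n,\sigma)^{\sharp}\times(\Delta^n)^{\flat}\coprod_{\N(n,\sigma)^{\sharp}\times(\partial\Delta^n)^{\flat}}\M(n,\sigma)^{\sharp}\times(\partial\Delta^n)^{\flat}\longrightarrow \M(n,\sigma)^{\sharp}\times(\Delta^n)^{\flat}, \]
indexed by nondegenerate $n$-simplices $\sigma$ of $K$. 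When $i$ is an injective cofibration every such pushout-product is a monomorphism, so $L(i)$ is a cofibration in $\sset^+$; when $i$ is injective anodyne, \cref{quillenadjunctionsharpmap} implies that each $\N(n,\sigma)^{\sharp}\to\M(n,\sigma)^{\sharp}$ is a trivial cofibration in the Cartesian model structure, and the pushout-product compatibility of Cartesian-trivial cofibrations against cofibrations in $\sset^+$ makes every attached cell a trivial cofibration, whence so is the transfinite composition $L(i)$.

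Third, combining these two observations yields the Quillen adjunction $(L,R)$, and therefore $\op{Map}[K,\Ca]=R(\Ca^{\natural})$ is injectively fibrant, which produces the desired lift $\beta$. The main obstacle is the second step: since $\Delta_{/K}$ is not a Reedy category in the naive sense (the degeneracy maps create loops), the skeletal filtration must be carried out on the subcategory of nondegenerate simplices via the Eilenberg--Zilber decomposition of $K$, and verifying at each stage that the attaching maps indeed take the advertised pushout-product form requires the careful cofibration analysis presumably collected in Appendices A and B.
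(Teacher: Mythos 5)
Your first and third steps are sound: the adjunction $L\dashv R$ with $L(\N)=\int^{(n,\sigma)}\N(n,\sigma)^{\sharp}\times(\Delta^n)^{\flat}$ does exist, transposing the lifting problem across it is legitimate, and by \cref{fibrantobjectsincartmodel} everything reduces to showing that $L(i)$ is a trivial cofibration in the Cartesian model structure. This is precisely the adjoint transpose of what the paper does degreewise, so modulo the second step you would not be taking a genuinely different route. The gap is in the second step, and it sits exactly where the paper's real work is. The underlying marked simplicial set of $L(\M)$ is the realization of the simplicial object $P_n=\coprod_{\sigma\in K_n}\M(n,\sigma)$, and in the skeletal filtration of a realization the cell attached at stage $n$ over a simplex $\sigma$ is the pushout--product of $(\partial\Delta^n)^{\flat}\hookrightarrow(\Delta^n)^{\flat}$ with the \emph{relative latching map}
\begin{equation*}
\bigl(\N(n,\sigma)\cup\M(n,\sigma)^{\op{deg}}\bigr)^{\sharp}\hookrightarrow\M(n,\sigma)^{\sharp},\qquad \M(n,\sigma)^{\op{deg}}=\bigcup_j\op{im}\bigl(\M(s_j)\colon\M(n-1,\rho_j)\to\M(n,\sigma)\bigr).
\end{equation*}
Your description is only correct when $\sigma$ is nondegenerate, for then there are no degeneracies out of $(n,\sigma)$ in $\Delta_{/K}$ and $\M(n,\sigma)^{\op{deg}}=\emptyset$. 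But degenerate simplices $\sigma$ \emph{do} contribute cells here, unlike in the one-variable coend $\int^{n}K_n\times\Delta^n\cong K$: the coend relations only absorb the subobject $\M(n,\sigma)^{\op{deg}}\times\Delta^n$ into lower stages, and $\M(n,\sigma)^{\op{deg}}$ is in general a proper subobject of $\M(n,\sigma)$ (the Eilenberg--Zilber decomposition applies to $K$, not to the values of $\M$). So indexing the cells by nondegenerate simplices of $K$ simply omits most of them.

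Consequently the statement you actually need, and which the proposal never addresses, is that the relative latching map above is a trivial cofibration, equivalently (via \cref{quillenadjunctionsharpmap} and \cref{anodyne2outof3}) that $\N(n,\sigma)^{\op{deg}}\to\M(n,\sigma)^{\op{deg}}$ is anodyne. This does not follow formally from $i$ being pointwise anodyne: one has to control unions and pairwise intersections of the images $\op{im}\M(s_j)$ for the various degeneracies out of $(n,\sigma)$. That is the content of \cref{distinctepiimageanodyne}, proved by induction on the number of degeneracies using the absolute pushouts among degeneracy maps in $\Delta_{/K}$ (\cref{absolutepullbacklemma}), which give the identifications $\N(k)_{s_i}\cap\N(k)_{s_m}=\N(k)_{s_i's_m}$ and $\N(k)_{s_m}\cap\M(k)_{s_i's_m}=\N(k)_{s_i's_m}$, together with the gluing lemma for anodynes (\cref{anodyonepushoutlemma}). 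Your closing hedge gestures at Appendices A and B but misidentifies their role: the issue is not whether the attaching maps for nondegenerate simplices take the advertised pushout--product form (they do), but that the advertised indexing drops the degenerate simplices, and handling those is the technical core of the proposition. Supply that lemma and your argument closes; without it, the proof does not go through.
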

\begin{remark}
    Some remarks on the proposition are as follows:
    \begin{enumerate}
        \item If we only consider the functor from $\Delta^{\op{nd}}_{/K}$, then \cref{mappingfunctorinjfibrant} still holds. The proof gets much simpler as the technical part is to deal with the non-degenerate simplices.  However $\Delta^{nd}_{/K}$ is functorial with respect to the monomorphism of simplicial sets. This is generally not the case in the situation of compactifications and other cases. Thus we really need to prove it on the level of $\Delta_{/K}$,
        \item The proof that we give is rewriting the proof given by Liu-Zheng in \cite[Proposition 2.8]{Gluerestnerv}. We try to present a more cleaner version of the proof explaining the technicalities. 
    \end{enumerate}
\end{remark}
\begin{proof}
Let $\Delta^{\le n}_{/K}$ be the subcategory of $\Delta_{/K}$ spanned by $(m,\sigma)$ where $m \le n$. We construct the functor $\beta$ by induction on $n$. In other words, we construct $\beta_n$ from $\Delta_{/K}^{\le n}$. \\

\begin{enumerate}
    \item \textbf{The case $n=0$:} Let $(0,\sigma)$ be an element of $\Delta_{/K}$. We need to construct the map $\beta(0,\sigma)$ such that the following diagram of simplicial sets commute:
    \begin{equation}
        \begin{tikzcd}
            \N(0,\sigma) \arrow[d,"{i(0,\sigma)}"] \arrow[r,"{\alpha(0,\sigma)}"] & \op{Map}^{\sharp}((\Delta^0)^{\flat},\Ca^{\natural}) \\
            \M(0,\sigma) \arrow[ur,dotted,"{\beta(0,\sigma)}",swap] & {}.
            \end{tikzcd}
    \end{equation}
    As $\op{Map}^{\sharp}((\Delta^0)^{\flat},\Ca^{\natural})$ is the largest Kan complex contained inside $\Ca$. It admits right lifting property with respect to anodynes. As $i(0,\sigma)$ is anodyne, the statement holds. 
    \item \textbf{Induction step $n=k-1 \Rightarrow n=k:$} Let $(k,\sigma)$ be an object of $\Delta_{/K}$. We need to construct a map \begin{equation}
        \beta(k,\sigma): \M(k,\sigma) \to \op{Map}^{\sharp}((\Delta^k)^{\flat},\Ca^{\natural}) 
    \end{equation} 
    such that the map is compatible with the face maps $d^k_i: (k-1,\tau_i) \to (k,\sigma)$ and the degeneracy maps $s^{k-1}_j: (k,\sigma) \to (k-1,\rho_j)$. This means the above map $\beta(k,\sigma)$ must makes the following squares commute $\forall$ $ i,j$ :
    \begin{equation}
        \begin{tikzcd}
            \M(k-1,\rho_j) \arrow[r,"{\beta(k-1,\rho_j)}"] \arrow[d,"{\M(s^{k-1}_j)}",swap] & \op{Map}^{\sharp}((\Delta^{k-1})^{\flat},\Ca^{\natural}) \arrow[d,"{\op{Map}[K,\Ca](s^{k-1}_j)}"] \\
            \M(k,\sigma) \arrow[r,"{\beta(k,\sigma)}",swap] & \op{Map}^{\sharp}((\Delta^k)^{\flat},\Ca^{\sharp})
        \end{tikzcd}
    \end{equation}
 \begin{equation}
        \begin{tikzcd}
            \M(k,\sigma) \arrow[r,"{\beta(k,\sigma)}"] \arrow[d,"{\M(d^{k}_i)}",swap] & \op{Map}^{\sharp}((\Delta^{k})^{\flat},\Ca^{\natural}) \arrow[d,"{\op{Map}[K,\Ca](d^{k}_i)}"] \\
            \M(k-1,\tau_i) \arrow[r,"{\beta(k-1,\tau_i)}",swap] & \op{Map}^{\sharp}((\Delta^{k-1})^{\flat},\Ca^{\sharp})
        \end{tikzcd}
    \end{equation}

Let  \begin{equation}\label{deguniondef}
    \M^{\op{deg}}(k,\sigma) := \cup_j \M(s_j^{k-1})(\M(k-1,\rho_j)).
\end{equation}
The commutativity of the squares forces the following conditions :
\begin{enumerate}
    \item The first square gives that we have a map  \[ \beta(k,\sigma)^{\op{deg}} : \M(k,\sigma)^{\op{deg}}  \to \op{Map}^{\sharp}((\Delta^k)^{\flat},\Ca^{\natural}).\]
     In other words, we have a map \[ \beta(k,\sigma)^{\op{deg}} : (\M(k,\sigma)^{\op{deg}})^{\sharp} \times (\Delta^k)^{\flat} \to \Ca^{\natural}.\]
     \item The second square for all $i$, gives that combining $\beta(k-1,\tau_i)$, we have the map :
     \[ \beta(k,\sigma)^{\op{face}}: \M(k,\sigma) \to \op{Map}^{\sharp}((\partial\Delta^k)^{\flat},\Ca^{\natural}).\]
     In other words, we have a map of marked simplicial sets: 
     \begin{equation}
         \beta(k,\sigma)^{\op{face}} : \M(k,\sigma)^{\sharp} \times (\partial\Delta^k)^{\flat} \to \Ca^{\flat}.
     \end{equation}
\end{enumerate}
Note that the map $\beta(k,\sigma)$ can be written as a map of marked simplicial sets:
\[ \beta(k,\sigma) : (\M(k,\sigma))^{\sharp} \times (\Delta^k)^{\flat} \to \Ca^{\natural}.\]

The maps $\beta(k,\sigma)^{\op{deg}}, \beta(k,\sigma)^{\op{face}}$ along the existence of the map \[ \alpha(k,\sigma) : (\N(k,\sigma))^{\sharp} \times (\Delta^k)^{\flat} \to \Ca^{\natural}\]
 gives us the map 
 \begin{equation}
    \N(k,\sigma) \xrightarrow{\beta(k,\sigma)'} \Ca^{\flat}.
 \end{equation}
 where 
 \begin{equation}
     \M(k,\sigma)':= (\N(k,\sigma) \cup \M(k,\sigma)^{\op{deg}})^{\sharp} \times (\Delta^k)^{\flat} \coprod_{(\N(k,\sigma) \cup \M(k,\sigma)^{\op{deg}})^{\sharp} \times (\partial\Delta^k)^{\flat}} (\M(k,\sigma))^{\sharp} \times (\partial\Delta^k)^{\flat}.
 \end{equation}

Hence the construction  of $\beta(k,\sigma)$ is equivalent to existence of the dotted arrow 
\begin{equation}
    \begin{tikzcd}
        \M(k,\sigma)' \arrow[d,hookrightarrow,"j"] \arrow[r,"{\beta(k,\sigma)'}"] & \Ca^{\natural} \\
        \M(k,\sigma)^{\sharp} \times (\Delta^k)^{\flat} \arrow[ur,dotted,"{\beta(k,\sigma)}",swap]& {}
    \end{tikzcd}
\end{equation}
such that the diagram commutes. 
As $\Ca^{\natural}$ is fibrant in Cartesian model structure (\cref{fibrantobjectsincartmodel}), we need to show that $i$ is a trivial cofibration. \\
Notice that $j$ is pushout product of $j_1^{\sharp} : (\N(k,\sigma) \cup \M(k,\sigma)^{\op{deg}})^{\sharp} \hookrightarrow \M(k,\sigma)^{\sharp}$ and  a cofibration $j_2: (\partial\Delta^k)^{\flat} \hookrightarrow (\Delta^k)^{\flat}$. If we show that $j_1^{\sharp}$ is a trivial cofibration, then the pushout product of a trivial cofibration and a cofibration will be a trivial cofibration. Thus we show that $j_1^{\sharp}$ is a trivial cofibration in $\sset^+$. By \cref{quillenadjunctionsharpmap}, we see that it suffices to show that $j_1$ is an anodyne in $\sset$.

\begin{claim}
If $j_1':\N(k,\sigma)^{\op{deg}} \to \M(k,\sigma)^{\op{deg}}$ is anodyne, then $j_1$ is anodyne. 
\end{claim}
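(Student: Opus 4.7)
The plan is to realise $j_1$ as a pushout of $j_1'$, up to the identification $\N^{\op{deg}}(k,\sigma) = \N(k,\sigma) \cap \M^{\op{deg}}(k,\sigma)$ computed inside $\M(k,\sigma)$, and then to conclude via two-out-of-three for weak equivalences in the Kan--Quillen model structure. Concretely, form the pushout
\[ P := \N(k,\sigma) \sqcup_{\N^{\op{deg}}(k,\sigma)} \M^{\op{deg}}(k,\sigma) \]
in $\sset$. By naturality of $i$, the map $i(k,\sigma): \N(k,\sigma) \to \M(k,\sigma)$ and the inclusion $\M^{\op{deg}}(k,\sigma) \hookrightarrow \M(k,\sigma)$ agree after restriction to $\N^{\op{deg}}(k,\sigma)$, so the universal property yields a canonical map $\varphi : P \to \M(k,\sigma)$, whose set-theoretic image is precisely the subsimplicial set $\N(k,\sigma) \cup \M^{\op{deg}}(k,\sigma)$.

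To see that $\varphi$ is a monomorphism (and hence coincides with $j_1$), I would verify the stated identification of intersections dimensionwise. Given $x \in \N(k,\sigma)_n$ with $i(k,\sigma)(x) = \M(s_j^{k-1})(w)$ for some $j$ and $w \in \M(k-1,\rho_j)_n$, apply the face operator $\M(d_j^k)$; since $s_j^{k-1} \circ d_j^k = \op{id}$ in $\Delta$, the map $\M(d_j^k)$ is a retraction of $\M(s_j^{k-1})$, so
\[ w = \M(d_j^k)(i(k,\sigma)(x)) = i(k-1,\rho_j)(\N(d_j^k)(x)) \]
by naturality of $i$. Substituting back, $i(k,\sigma)(x) = i(k,\sigma)(\N(s_j^{k-1})(\N(d_j^k)(x)))$, and pointwise injectivity of $i$ (anodyne maps in $\sset$ are monomorphisms) forces $x = \N(s_j^{k-1})(\N(d_j^k)(x))$, which lies in $\N^{\op{deg}}(k,\sigma)$.

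Granted this identification, $j_1$ is literally a pushout of $j_1'$ along the inclusion $\N^{\op{deg}}(k,\sigma) \hookrightarrow \N(k,\sigma)$. Since anodynes in $\sset$ are stable under pushouts, the induced map $\N(k,\sigma) \to \N(k,\sigma) \cup \M^{\op{deg}}(k,\sigma)$ is anodyne. The composite
\[ \N(k,\sigma) \to \N(k,\sigma) \cup \M^{\op{deg}}(k,\sigma) \xrightarrow{j_1} \M(k,\sigma) \]
equals $i(k,\sigma)$, which is anodyne by the standing hypothesis on $i: \N \to \M$ in the injective model structure. Applying two-out-of-three for weak equivalences in the Kan--Quillen model structure, $j_1$ is a weak equivalence; being a monomorphism (an inclusion of subsimplicial sets), it is a trivial cofibration, i.e., an anodyne map.

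The main obstacle is the intersection identification in the second paragraph: it hinges on the split monicity of the degeneracy maps $\M(s_j^{k-1})$ (retracted by the corresponding face maps $\M(d_j^k)$, thanks to $s_j^{k-1} \circ d_j^k = \op{id}$) combined with the dimensionwise injectivity of the natural transformation $i$. Everything else reduces to the formal properties (pushouts, two-out-of-three, monos being cofibrations) of the Kan--Quillen model structure.
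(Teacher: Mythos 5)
Your proof is correct and follows essentially the same route as the paper: realise the map $\N(k,\sigma)\to \N(k,\sigma)\cup\M^{\op{deg}}(k,\sigma)$ (the paper's $j_1''$) as the pushout of $j_1'$, then apply two-out-of-three for weak equivalences together with the fact that $j_1$ is a monomorphism (the paper's \cref{anodyne2outof3}) to the factorization $i(k,\sigma)=j_1\circ j_1''$. The only differences are cosmetic: you additionally verify, via the retraction $\M(d_j^k)$ of $\M(s_j^{k-1})$ and the dimensionwise injectivity of $i$, that $\N(k,\sigma)\cap\M^{\op{deg}}(k,\sigma)=\N^{\op{deg}}(k,\sigma)$ --- the point the paper asserts without justification when it declares its square a pushout --- and your phrase that ``$j_1$ is literally a pushout of $j_1'$'' should say that $j_1''$ is the pushout of $j_1'$, as your very next sentence already makes clear.
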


\begin{proof}
    We have the following commutative diagram 
    \begin{equation}
        \begin{tikzcd}
            \N(k,\sigma)^{\op{deg}} \arrow[r,hookrightarrow] \arrow[d,hookrightarrow,"j_1'"] & \N(k,\sigma) \arrow[d,"j_1''"] \arrow[dr,"{i(k,\sigma)}"] & {} \\
            \M(k,\sigma)^{\op{deg}} \arrow[r,hookrightarrow] & \M(k,\sigma)^{\op{deg}} \cup \N(k,\sigma) \arrow[r,hookrightarrow,"j_1"] & \M(k,\sigma)
             \end{tikzcd}
    \end{equation}
    where the square is a pushout square. Thus we have $j_1''$ is anodyne. As $j_1''$ and $i(k,\sigma)$ are anodyne and $j_1$ is a monomorphism, by \cref{anodyne2outof3}, we get that $j_1$ is anodyne.
\end{proof}

\begin{notation}
    Let $s:(k,\sigma) \to (k,\rho)$ be a degeneracy map. Then let $\M(k)_s$ denote the image of $\M(s)$.
\end{notation}
We prove the following claim :
\begin{claim}\label{distinctepiimageanodyne}
    Let $s_1,\cdots ,s_l$ be a collection of degeneracy maps where $s_j: (k,\sigma) \to (k-1,\rho_j)$. Then the morphism 
    \begin{equation}
        \cup_{j=1}^l \N(k)_{s_j} \to \cup_{j=1}^l \M(k)_{s_j}
     \end{equation}
     is anodyne.
\end{claim}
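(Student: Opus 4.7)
The plan is a nested induction: outer on the dimension $k$, inner on $l$. The engine is the closure of anodynes in $\sset$ under pushouts together with the gluing lemma for pushouts of anodynes along monomorphisms. The base case exploits that every degeneracy in $\Delta_{/K}$ admits a section, while the inductive step reduces the intersection of two degeneracy images inside $\N(k,\sigma)$ to a single degeneracy image at dimension $k-1$ via the simplicial identities.

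For the base case $l=1$, the underlying surjection $[k]\twoheadrightarrow[k-1]$ of $s_1$ admits a section $d$ in $\Delta$ with $s_1\circ d=\mathrm{id}_{[k-1]}$. Viewing $(k,\sigma)$ as a map $\sigma:\Delta^k\to K$, the relation $\sigma=\rho_1\circ s_1$ yields $\sigma\circ d=\rho_1$, so $d$ promotes to a morphism $(k-1,\rho_1)\to(k,\sigma)$ in $\Delta_{/K}$ that splits $s_1$. Contravariance then turns $\N(s_1)$ and $\M(s_1)$ into split monomorphisms, giving $\N(k)_{s_1}\cong\N(k-1,\rho_1)$ and $\M(k)_{s_1}\cong\M(k-1,\rho_1)$. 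Hence the induced map on images is $i(k-1,\rho_1)$, which is anodyne by the hypothesis that $i$ is an injective anodyne.

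For the inductive step at $(k,l)$, assume the statement at $(k,l-1)$ and at $(k-1,l')$ for all $l'\le l$. In $\sset$, unions and intersections of subobjects of $\N(k,\sigma)$ produce the pushout square
\[
\begin{tikzcd}
\bigl(\bigcup_{j<l}\N(k)_{s_j}\bigr)\cap\N(k)_{s_l} \ar[r]\ar[d] & \N(k)_{s_l} \ar[d] \\
\bigcup_{j<l}\N(k)_{s_j} \ar[r] & \bigcup_{j\le l}\N(k)_{s_j},
\end{tikzcd}
\]
and the analogous one for $\M$; all four horizontal maps are monomorphisms. By the inner hypothesis the map $\bigcup_{j<l}\N(k)_{s_j}\to\bigcup_{j<l}\M(k)_{s_j}$ is anodyne, and by the base case $\N(k)_{s_l}\to\M(k)_{s_l}$ is anodyne, so by the gluing lemma it suffices to verify the same for the induced map on top-left corners. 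Using the isomorphism $\N(k)_{s_l}\cong\N(k-1,\rho_l)$ from the base case, each intersection $\N(k)_{s_j}\cap\N(k)_{s_l}$ for $j<l$ corresponds, via the simplicial identities rewriting $s_j d_l$ as a composition $d_{?}s_{?}$, to the image of a single degeneracy at dimension $k-1$ inside $\N(k-1,\rho_l)$. The top-left corner therefore becomes a union of $l-1$ such degeneracy images of $\N(k-1,\rho_l)$, to which the outer inductive hypothesis at $(k-1,l-1)$ applies and closes the induction.

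The obstacle I expect to be the most delicate is the combinatorial identification of each $\N(k)_{s_j}\cap\N(k)_{s_l}$ as a single degeneracy image of $\N(k-1,\rho_l)$ rather than a more complicated subobject: this rests on a case analysis of $s_jd_l$ in the three regimes $l<j$, $l\in\{j,j+1\}$, and $l>j+1$, and on checking that the splitting characterization of a degeneracy image is compatible with these identifications so that the outer inductive hypothesis can be invoked. Once that combinatorial bookkeeping is carried out, the nested induction closes without further work.
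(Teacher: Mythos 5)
Your proposal follows essentially the same route as the paper: the same base case (a degeneracy in $\Delta_{/K}$ is a split epimorphism, so $\N(s_1)$ is a split monomorphism and the map on images is isomorphic to $i(k-1,\rho_1)$), the same pushout decomposition of $\bigcup_{j\le l}\N(k)_{s_j}$ over the intersection with $\N(k)_{s_l}$, the same identification of that intersection with a union of $l-1$ degeneracy images one dimension down (the paper obtains this by applying the contravariant functors $\N,\M$ to the absolute pushout of two degeneracies, \cref{absolutepullbacklemma}, rather than by your fixed-point/simplicial-identity bookkeeping, but it is the same computation), and the same nested induction to close the corner term.

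The one step you wave through is the invocation of ``the gluing lemma.'' The assertion ``front and back faces are pushouts along monomorphisms and $f_0,f_1,f_2$ are anodyne, hence $f_3$ is anodyne'' is not a theorem as stated: the cube lemma for pushouts needs either left properness of the model structure or the Reedy-type condition that the induced map
\begin{equation*}
\Bigl(\bigcup_{j<l}\M(k)_{s_j's_l}\Bigr)\coprod_{\bigcup_{j<l}\N(k)_{s_j's_l}}\N(k)_{s_l}\longrightarrow \M(k)_{s_l}
\end{equation*}
is a monomorphism. The paper's \cref{anodyonepushoutlemma} takes the second route, and verifying this hypothesis is the most laborious part of its proof: one must show $\M(k)_{s_j's_l}\cap\N(k)_{s_l}=\N(k)_{s_j's_l}$, which the paper deduces from the fact that the naturality squares of $i:\N\to\M$ along split epimorphisms are pullbacks (again via \cref{absolutepullbacklemma}, now applied with $i(n,\zeta)$ as the monomorphism). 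Your write-up does not address this condition at all. The gap is repairable without that computation --- $\sset$ is left proper, so the left-properness form of the cube lemma yields that $f_3$ is a weak equivalence, and $f_3$ is a monomorphism because it is the restriction of $i(k,\sigma)$ to a subobject, hence anodyne by \cref{anodyne2outof3} --- but one of these two justifications must be supplied; as written, the key hypothesis of the lemma you call your engine is unchecked.
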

\textbf{Claim implies proof of $j_1'$ being anodyne:} Applying the above claim to $k$-distinct degeneracy maps gives us that $\N(k,\sigma)^{\op{deg}}:= \cup_{j=0}^{k-1} \N(\rho_j) \to \M(k,\sigma)^{\op{deg}}$ is anodyne hence completing the proof.

\begin{proof}[Proof of \cref{distinctepiimageanodyne}]
We prove this by induction on $l$. 
\begin{enumerate}
    \item \textbf{$l=1$:} In this case, we see that the  $\N(k)_{s_1} \cong \N(k-1,\sigma_1)$. This is true as $\N(s_1)$ is a monomorphism, then the image is isomorphic to its domain. Thus the map $\N(k)_{s_1} \to \M(k)_{s_1}$ is isomorphic to the map $\N(k-1,\rho_1) \to \M(k-1,\rho_1)$ which is an anodyne.
    \item  \textbf{$l=m-1 \implies l=m$:} For every $1 \le i \le m-1 $, we consider a pair of morphisms ($s_i,s_m$). By \cref{absolutepullbacklemma}, we get there exists an absolute pushout square :
    \begin{equation}
        \begin{tikzcd}
            (k,\sigma) \arrow[r,"s_i"] \arrow[d,"s_m"] & (k-1,\rho_i) \arrow[d] \\
            (k-1,\rho_m) \arrow[r,"s_i'"] & (k-2,\rho_{im}).
        \end{tikzcd}
    \end{equation}
    Thus for $\N$ (similarly for $\M$), we have a pullback square (as $\N$ and $\M$ are functors from $\Delta_{/K}^{op}$):
    \begin{equation}
    \begin{tikzcd}
            \N(k-2,\rho_{im}) \arrow[d] \arrow[r,"{\N(s_i')}"] &\N(k-1,\rho_m) \arrow[d,"{\N(s_m)}"]\\
        \N(k-1,\sigma_i) \arrow[r,"{\N(s_i)}"] & \N(k,\sigma).
    \end{tikzcd}
     \end{equation}
     By definition of pullback we get that the intersection of image of $\N(s_i')$ and $\N(s_m)$ is $\N(s_i's_m)$. This gives that we have a pushout square: 
     \begin{equation}
         \begin{tikzcd}
             \N(k)_{s_i's_m} \arrow[r] \arrow[d] & \N(k)_{s_1} \arrow[d] \\
             \N(k)_{s_m} \arrow[r] & \N(k)_{s_1}\cup\N(k)_{s_m}.
         \end{tikzcd}
     \end{equation}
    Combining the pushout squares for every $i$,we get the following pushout square
    \begin{equation}\label{pushoutsquareinduction}
        \begin{tikzcd}
            \cup_{i=1}^{m-1}\N(k)_{s_i's_m} \arrow[r]\arrow[d] & \cup_{i=1}^{m-1}\N(k)_{s_i} \arrow[d] \\
            \N(k)_{s_m} \arrow[r] & \cup_{i=1}^m \N(k)_{s_i}.
        \end{tikzcd}
    \end{equation}

Similar pushout square also holds for $\M$ too.\\
Consider the cube: 
 \begin{equation}
        \begin{tikzcd}
            \cup_{i=1}^{m-1}\N(k)_{s_i's_m} \arrow[rr] \arrow[dr,"f_0"] \arrow[dd] && \N(k)_{s_m} \arrow[dd] \arrow[dr,"f_1"] & {} \\
            {} & \cup_{i=1}^{m-1}\M(k)_{s_i's_m} \arrow[rr,"g_1"] \arrow[dd] && \M(k)_{s_m} \arrow[dd] \\
             \cup_{i=1}^{m-1}\N(k)_{s_i} \arrow[rr] \arrow[dr,"f_2"]  && \cup_{i=1}^m\N(k)_{s_i}  \arrow[dr,"f_3"] & {}\\
                   {} & \cup_{i=1}^{m-1}\M(k)_{s_i} \arrow[rr] && \cup_{i=1}^m\M(k)_{s_i}.       
        \end{tikzcd}
    \end{equation} 
In order to prove that $f_3$ is anodyne, we use \cref{anodyonepushoutlemma}. We verify the conditions in order to use \cref{anodyonepushoutlemma}:
\begin{enumerate}
    \item By induction $f_0,f_1$ and $f_2$ are anodynes.
    \item The front and back squares are pushout squares (\cref{pushoutsquareinduction}).
    \item We need to show that
    \begin{equation}
         f_{01}:\cup_{i=1}^{m-1} \M(k)_{s_i's_m}\coprod_{\cup_{i=1}^{m-1} \N(k)_{s_i's_m}} \N(k)_{s_m} \to \M(k)_{s_m}
    \end{equation}
    is a monomorphism. We show that coproduct is $\cup_{i=1}^{m-1} \M(k)_{s_i's_m} \bigcup \N(k)_{s_m}$. This shows that induced map $f_{01}$ is the union of $f_1$ and $g_1$ which are both monomorphisms. 

    In order to show that the coproduct is union, it is equivalent to show that for each $i$ $\M(k)_{s_i's_m} \cap \N(k)_{s_m} = \N(k)_{s_i's_m}$. \\

    For every  split epimorphism $s:(n,\zeta) \to (n',\zeta')$, we have the diagram :
     \begin{equation}
        \begin{tikzcd}
            \N(n',\zeta')\arrow[r,"{\N(s)}"] \arrow[d,"{i(n',\zeta')}"] & \N(n,\zeta) \arrow[r,"{\N(d)}"] \arrow[d,"{i(n,\zeta)}"] & \N(n',\zeta') \arrow[d,"{\i(n',\zeta')}"] \\
            \M(n'\zeta') \arrow[r,"{\M(s)}"] & \M(n,\zeta) \arrow[r,"{\M(d)}"] & \M(n,\zeta).
        \end{tikzcd}
    \end{equation}
    where $d$ is a section of $s$. As $i(n,\zeta)$ is a monomorphism, applying \cref{absolutepullbacklemma} gives us that left square is a pullback square. Applying this fact on the epimorphisms $s_m: (k,\sigma) \to (k-1,\rho_m)$ and $s_i':(k-1,\rho_m) \to (k-2,\rho_{im})$, we get the pullback squares :
     \begin{equation}
        \begin{tikzcd}
            \N(k-2,\rho_{im})\arrow[r,"{\N(s_i')}"] \arrow[d,"{i(k-2,\rho_{im})}"] & \N(k-1,\rho_m) \arrow[r,"\N(s_m)"] \arrow[d,"{i(k-1,\rho_m)}"] & \N(k,\sigma) \arrow[d,"{i(k,\sigma)}"] \\
            \M(k-2,\rho_{im}) \arrow[r,"{\M(s_i')}"] & \M(k-1,\rho_m) \arrow[r,"\M(s_m)"] & \M(k,\sigma).
        \end{tikzcd}
    \end{equation}
    This gives us a pullback square: 
    \begin{equation}
            \begin{tikzcd}
         \N(k-2,\rho_{im}) \arrow[r,"{\N(s_i')}"] \arrow[d,"{i(k-2,\rho_{im})}"] & \N(k-1,\rho_m) \arrow[d," {i(k,\sigma)\circ \N(s_m)}"] \\
         \M(k-2,\rho_{im}) \arrow[r,"{\N(s_ms_i')}"] & \M(k,\sigma).
    \end{tikzcd}
    \end{equation}
Taking intersection of the images of the maps in the pullback square we get that 
\begin{equation}
    \N(k)_{s_m}\cap \M(k)_{s_i's_m} = \N(s_i's_m).
\end{equation}

\end{enumerate}
Thus the conditions of \cref{anodyonepushoutlemma} are verified. Hence $f_3$ is anodyne. This completes the proof of \cref{distinctepiimageanodyne}.
    \end{enumerate}
\end{proof}
This also completes the induction of process for $n=k$. Hence we have constructed the map $\beta(k,\sigma)$ lifting $\alpha(k,\sigma)$.
    \end{enumerate}
\end{proof}
\section{The main theorem}
In the last section of the article, we prove the main theorem. 

\begin{theorem}\label{mainthm}
Let $K',K$ be simplicial sets and $\Ca$ be a $\infty$-category. Let $f': K' \to \Ca$ and $i:K' \to K$ be morphisms of simplicial sets. Let $\N \in (\sset)^{(\Delta_{/K})^{op}}$ and $\alpha: \N \to \op{Map}[K,\Ca]$ be a natural transformation. If
\begin{enumerate}
 \item (\textit{Weakly contractibility}) for $(n,\sigma) \in \Delta_{/K}$, $\N(n,\sigma)$ is weakly contractible,
 \item (\textit{Compatability with $f'$}) there exists $\omega \in \Gamma(i^*\N)_0$ such that $\Gamma(i^*\alpha)(\omega)=f'$,
 \end{enumerate}
 
then there exists a map $f: K \to \Ca$ such that the following diagram 
\begin{equation}
    \begin{tikzcd}
        K' \arrow[r,"f'"] \arrow[d,"i"] & \Ca \\
        K \arrow[ur,"f"] & {}
    \end{tikzcd}
\end{equation}
 commutes. In other words, $f' \cong f \circ i $ in $\op{Fun}(K',\Ca)$.
\end{theorem}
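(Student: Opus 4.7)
The plan is to combine the injective fibrancy of $\op{Map}[K,\Ca]$ established in \cref{mappingfunctorinjfibrant} with the weak contractibility of $\N$ to extract the desired global section. Since $\Gamma(\op{Map}[K,\Ca]) = \op{Map}^{\sharp}(K^{\flat},\Ca^{\natural})$ is the maximal Kan complex of $\op{Fun}(K,\Ca)$, specifying a functor $f : K \to \Ca$ up to equivalence is the same as specifying a connected component of this Kan complex; the goal is therefore to produce a $0$-simplex of $\Gamma(\op{Map}[K,\Ca])$ whose image under the pullback map to $\Gamma(\op{Map}[K',\Ca])$ lies in the same component as $f'$.

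By weak contractibility, the terminal morphism $\pi : \N \to c(\Delta^0)$ in $(\sset)^{(\Delta_{/K})^{op}}$ is pointwise a weak equivalence, hence a weak equivalence in the injective model structure. Factor $\pi$ as $\N \xrightarrow{j} \widetilde{\N} \xrightarrow{q} c(\Delta^0)$ with $j$ anodyne and $q$ an injective fibration; then by two-out-of-three, $q$ is a trivial fibration. Injective fibrancy of $\op{Map}[K,\Ca]$ provides an extension $\tilde{\alpha}: \widetilde{\N} \to \op{Map}[K,\Ca]$ of $\alpha$ along $j$. Since every object of $(\sset)^{(\Delta_{/K})^{op}}$ is cofibrant (cofibrations being pointwise monomorphisms), the trivial fibration $q$ admits a section $s : c(\Delta^0) \to \widetilde{\N}$. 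Set $f : K \to \Ca$ to be the functor corresponding to the $0$-simplex $\tilde{\alpha} \circ s \in \Gamma(\op{Map}[K,\Ca])_0$.

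To verify the compatibility $f \circ i \cong f'$, apply $i^*$ pointwise. Trivial fibrations in the injective model structure are characterized pointwise, and $i^*$ is itself pointwise, so $i^*q : i^*\widetilde{\N} \to c(\Delta^0)$ remains a trivial fibration in $(\sset)^{(\Delta_{/K'})^{op}}$; as $\Gamma$ is right Quillen, $\Gamma(i^*\widetilde{\N})$ is a contractible Kan complex. Hence the two $0$-simplices $i^*s$ and $\Gamma(i^*j)(\omega)$ of $\Gamma(i^*\widetilde{\N})$ lie in the same connected component. Applying $\Gamma(i^*\tilde{\alpha})$, the former maps to the $0$-simplex corresponding to $f \circ i$, while the latter equals $\Gamma(i^*\alpha)(\omega) = f'$ (using $\tilde{\alpha} \circ j = \alpha$ and functoriality of $i^*$ and $\Gamma$). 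Therefore $f \circ i$ and $f'$ lie in the same component of the maximal Kan complex $\Gamma(\op{Map}[K',\Ca])$ inside $\op{Fun}(K',\Ca)$, giving the desired equivalence.

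The substantive technical input is \cref{mappingfunctorinjfibrant} itself; once the mapping functor is known to be injectively fibrant, the theorem reduces to the formal model-categorical manipulations above. The only delicate point is that trivial fibrations in the injective model structure on $(\sset)^{\J}$ agree with pointwise trivial Kan fibrations, a standard fact for injective structures on functor categories valued in a combinatorial model category, which is precisely what allows the pullback $i^*$ to preserve the trivial fibration $q$ and thereby transport the contractibility argument from the $K$-side to the $K'$-side.
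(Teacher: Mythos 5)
Your construction of $f$ is fine and is essentially an abstract version of what the paper does: where you factor $\pi:\N\to c(\Delta^0)$ as an anodyne $j$ followed by an injective fibration $q$ and extract a section $s$ of the resulting trivial fibration, the paper takes the explicit factorization $\N\hookrightarrow\N^{\triangleright}\to c(\Delta^0)$ through the pointwise cone and uses the cone point as the global section. The gap is in the compatibility step. Your argument rests on the claim that trivial fibrations in the \emph{injective} model structure on $(\sset)^{\J}$ are exactly the pointwise trivial Kan fibrations; that is the characterization for the \emph{projective} structure. In the injective structure the trivial fibrations are the maps with the right lifting property against all pointwise monomorphisms, which is strictly stronger than being a pointwise trivial fibration. (Over $\J=\{0\to 1\}$, take $X=(\Delta^0\to\Delta^1)$ the inclusion of a vertex: $X\to c(\Delta^0)$ is a pointwise trivial fibration but fails the lifting property against the cofibration $(\emptyset\to\Delta^0)\hookrightarrow(\Delta^0=\Delta^0)$.) Since $i^*$ is left Quillen, not right Quillen, for the injective structures, $i^*q$ is only a \emph{pointwise} trivial fibration over $(\Delta_{/K'})^{op}$, and $\Gamma(i^*\widetilde{\N})=\lim i^*\widetilde{\N}$ is an ordinary limit of a pointwise contractible diagram: without injective fibrancy of $i^*\widetilde{\N}$ this limit need not be contractible, connected, or even nonempty. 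So you cannot conclude that $i^*s$ and $\Gamma(i^*j)(\omega)$ lie in the same component, and the verification that $f\circ i\cong f'$ does not go through.

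This is precisely the point the explicit cone construction is designed to handle. With $\widetilde{\N}=\N^{\triangleright}$, the cone point gives a canonical vertex $z'$ of $\Gamma(i^*\N^{\triangleright})$, and for each $(n,\sigma)\in\Delta_{/K'}$ there is a canonical edge from $\omega(n,\sigma)$ to the cone point of $\N(n,i\sigma)^{\triangleright}$; these edges are natural in $(n,\sigma)$, so they assemble into an actual edge of the limit $\Gamma(i^*\N^{\triangleright})$ joining $\Gamma(i^*\eta)(\omega)$ to $z'$. No contractibility of $\Gamma(i^*\N^{\triangleright})$ is needed, only this one explicit edge. To repair your argument you would either have to replace the abstract factorization by one for which the comparison on the $K'$-side can be made explicit (as above), or prove that your particular $q$ pulls back to an injective trivial fibration over $(\Delta_{/K'})^{op}$, which is not automatic.
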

\begin{proof}
    Let \[ \N^{\triangleright} : \Delta_{/K}^{op}\to \sset ~~ (n,\sigma) \to \N(\sigma)^{\triangleright}.\]
    The canonical morphism  \[ \eta: \N \to \N^{\triangleright}\] is an anodyne. This is because for every weakly contractible simplicial set $K$, the inclusion $K \to K^{\triangleright}$ is an anodyne (\cite[Corollary 4.4.4.10]{HTT}). \\
    As $\op{Map}[K,\Ca]$ is injectively fibrant (\cref{mappingfunctorinjfibrant}), the map $\alpha$ has the following factorization :
\begin{equation}
    \begin{tikzcd}
        \N \arrow[r,"\alpha"]\arrow[d,"\eta"] & \op{Map}[K,\Ca] \\
        \N^{\triangleright} \arrow[ur,"\beta"] & {}.
    \end{tikzcd}
    \end{equation}
Let $z$ be the cone point of the simplicial set $\Gamma(\N^{\triangleright})$. Then we define \[f:= \Gamma(\beta)(z) :K \to \Ca. \]

We need to check is the compatibility of $f$ with $f'$. Pulling back along $i$, we get that $i^*\alpha$  has the following factorization: 
\begin{equation}
    \begin{tikzcd}
        i^*\N \arrow[r,"i^*\alpha"] \arrow [d,"i^*\eta"] & \i^*\op{Map}[K,\Ca]=\op{Map}[K',\Ca] \\
        (i^*\N)^{\triangleright} \arrow[ur,swap,"i^*\beta"] & {}.
    \end{tikzcd}
\end{equation}
Let $z'$ be the cone point of $\Gamma(i^*\N^{\triangleright})$. Note that we have the following commutative diagram of simplicial sets (\cref{commutativityofglobalsecunderpullback}): 
\begin{equation}
    \begin{tikzcd}
        \Gamma(\N^{\triangleright}) \arrow[r,"\Gamma(\beta)"] \arrow[d,"(-) \circ i"] & \Gamma(\op{Map}[K,\Ca]) = \op{Map}^{\sharp}(K^{\flat},\Ca^{\natural}) \arrow[d," (-) \circ i"]\\
        \Gamma(i^*\N^{\triangleright}) \arrow[r,"\Gamma(i^*\beta)"] & \Gamma(i^*\op{Map}[K,\Ca])= \op{Map}^{\sharp}(K^{\flat},\Ca^{\natural}).
    \end{tikzcd}
\end{equation}
Note that $ z \circ i =z'$. Thus we have that 
\begin{equation}\label{commutativityofconevertex}
    \Gamma(i^*\beta)(z')= \Gamma(i^*\beta)(z \circ i) = \Gamma(\beta)(z) \circ i = f \circ i .
\end{equation}
As the cone point is connected to every vertex, we see that $\Gamma(i^*\eta)(\omega)$ and $z'$ are connected. Thus 
\[ \Gamma(i^*\beta \circ i^*\eta)(\omega)= \Gamma(i^*\alpha)(\omega) =f' ~~ \text{and}~~ \Gamma(i^*\beta)(z')= \Gamma(\beta)(z) \circ i =f \circ i (\cref{commutativityofconevertex})\] are connected vertices in the Kan complex $\op{Map}^{\sharp}(K^{\flat},\Ca^{\natural})$. Thus we have 
\[ f' \equiv f \circ i ~~ \text{in}~~ \op{Fun}(K',\Ca). \]
    
    \end{proof}
\begin{remark}
    We point out some remarks of the theorem :
\begin{enumerate}
\item In the proof of the theorem, we used \cite[Corollary 4.4.4.10]{HTT}. The corollary says that let \begin{equation}
    h: S \to K
\end{equation}  be a map of simplicial sets where $S$ is a weakly contractible simplicial set and $K$ is a Kan complex. Then the morphism $h$ admits a colimit :
\begin{equation}
    h' : S^{\triangleright} \to K.
\end{equation}
    \item Let us explain how $f$ is evaluated for any arbitrary $n$-simplex $\sigma : \Delta^n \to K$. Evaluating $\alpha$ at the obejcts $(n,\sigma) \in \Delta_{/K}$, we get the following morphism :
    \begin{equation}
        \alpha(n,\sigma) : \N(n,\sigma) \to \op{Fun}^{\cong}(\Delta^n,\Ca).
    \end{equation}
    As $\N(n,\sigma)$ is weakly contractible, we have an extension :
    \begin{equation}
        \N(n,\sigma)^{\triangleright} \xrightarrow{\alpha'(n,\sigma)}\op{Fun}^{\cong}(\Delta^n,\Ca).
    \end{equation}
    We then define $f(\sigma)= \alpha'(n,\sigma)(z) = \op{lim} \alpha(n,\sigma)$ where $z$ is the cone point. \\
    If there exists $\sigma' \in K'_n$ mapping to $\sigma$, then the second condition of the theorem says that $f'(\sigma') \cong f(\sigma)$ in $\op{Fun}(\Delta^n,\Ca)$.
\end{enumerate}
\end{remark}
\begin{appendices}
\section{Remarks on absolute pullbacks.}

\begin{definition}
    Let $\Ca$ be an $1$-category. A square in $\Ca$ is said to be an \textit{absolute pullback (pushout)} if every functor $F: \Ca \to \D$ sends the square to a pullback (pushout) square in $\D$.
\end{definition}
\begin{lemma}\label{absolutepullbacklemma}
    Let $\Ca$ be a $1$-category. Suppose we have a diagram in $\Ca$ of the form
    \begin{equation}
        \begin{tikzcd}
            A \arrow[r,"d"] \arrow[d,"i"] & B \arrow[r,"s"] \arrow[d,"j"] & A \arrow[d,"i"] \\
            A' \arrow[r,"d'"] & B' \arrow[r,"s'"] & A'
        \end{tikzcd}
    \end{equation}
    where 
    \begin{enumerate}
        \item $s \circ d = \op{id}_A$,
        \item $s'\circ d' = \op{id}_{A'}$
        \item $j$ is a monomorphism (epimorphism).
    \end{enumerate}
    Then the left (right) square is a pullback (pushout). If $j$ is a split monomorphism (epimorphism), then the left (right) square is an absolute pullback (pushout).
    
\end{lemma}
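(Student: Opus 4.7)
My plan is to verify the universal properties directly by explicit construction, using the retractions $s \circ d = \op{id}_A$ and $s' \circ d' = \op{id}_{A'}$ to produce and uniquely pin down the factorizing map, and invoking the mono/epi hypothesis on $j$ only to close the one identity that is not immediate. The four assertions split into a pullback statement, a pushout statement, and their ``absolute'' strengthenings, which I would handle in that order.

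For the left square being a pullback when $j$ is a monomorphism, suppose $u : X \to A'$ and $v : X \to B$ satisfy $d' \circ u = j \circ v$. I would set $w := s \circ v$. The identity $i \circ w = u$ follows from the commutativity $i \circ s = s' \circ j$ of the right square together with $s' \circ d' = \op{id}_{A'}$, which collapse $i \circ s \circ v = s' \circ j \circ v = s' \circ d' \circ u = u$. For the more delicate identity $d \circ w = v$, I would precompose with $j$: using $j \circ d = d' \circ i$ (commutativity of the left square) and the hypothesis $d' \circ u = j \circ v$, both sides become $d' \circ u$, so the monomorphism property of $j$ yields $d \circ w = v$. Uniqueness is automatic, since any $w'$ with $d \circ w' = v$ satisfies $w' = s \circ d \circ w' = s \circ v = w$.

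The pushout statement, when $j$ is an epimorphism, is formally dual: given $u : A \to X$ and $v : B' \to X$ with $u \circ s = v \circ j$, I would set $w := v \circ d'$ and verify $w \circ i = u$ directly from $d' \circ i = j \circ d$ and $s \circ d = \op{id}_A$, while the identity $w \circ s' = v$ is forced by postcomposing with $j$ and invoking the epi hypothesis. Uniqueness follows from $s' \circ d' = \op{id}_{A'}$ in the same way.

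For the absolute statements, the key observation is that split monomorphisms and split epimorphisms are preserved by any functor, because the defining equation is purely equational on arrows: if $r \circ j = \op{id}$ in $\Ca$, then $F(r) \circ F(j) = \op{id}$ in $\D$ for every $F : \Ca \to \D$. Thus if $j$ is a split mono (resp.\ split epi), applying an arbitrary functor $F$ yields a diagram in $\D$ satisfying all hypotheses of the previous cases, so the left (resp.\ right) square is sent to a pullback (resp.\ pushout); as $F$ was arbitrary, the square is absolute. I do not foresee any real obstacle; the only subtlety worth flagging is the asymmetric role of the retractions, namely that $s \circ d = \op{id}_A$ (but not the reverse) suffices precisely because the mono/epi property of $j$ supplies the missing direction in the one non-formal step.
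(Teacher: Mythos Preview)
Your argument is correct and follows essentially the same route as the paper: define the factorizing map via the retraction $s$, verify one leg directly from commutativity of the right square and $s'\circ d'=\op{id}_{A'}$, and close the other leg by composing with $j$ and invoking the mono hypothesis. Your treatment of the absolute case (noting that split monos/epis are preserved by any functor and then reapplying the first part) is a slightly cleaner packaging than the paper's, which redoes the computation in $\D$ and cancels $F(j)$ explicitly via its retraction; and you add the uniqueness check, which the paper leaves implicit. One terminological nit: where you say ``precompose with $j$'' you mean compose on the left (postcompose), but the computation you write is the intended one.
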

\begin{proof}
    Let us prove that the left square is a pullback square. Let $X$ be an object in $\Ca$ and suppose we have a following diagram 
     \begin{equation}
        \begin{tikzcd}
        X \arrow[drr, bend left  = 80, "p"] \arrow[dr,dotted,"r"] \arrow[ddr,bend right= 80,"q"] & {} & {} & {}\\
          {}&  A \arrow[r,"d"] \arrow[d,"i"] & B \arrow[r,"s"] \arrow[d,"j"] & A \arrow[d,"i"] \\
            {}& A' \arrow[r,"d'"] & B' \arrow[r,"s'"] & A'.
        \end{tikzcd}
    \end{equation}
    such that $j \circ p =d' \circ q$.
    The aim is to construct a morphism $r$ such that 
    \begin{enumerate}
        \item $d \circ r =p$,
        \item $i \circ r= q$.
    \end{enumerate}
    Define $r:=s \circ p$. Let us verify the conditions mentioned above.
    \begin{enumerate}
    \item \textbf{$i \circ r =q$} 
    \begin{align*}
         i \circ r= i \circ s \circ p \\
         = s' \circ j \circ p \\
         = s' \circ d' \circ q \\
         = q.
    \end{align*}
    \item \textbf{$d \circ r = p.$}\\
    As $j$ is a monomorphism it is enough to show $j \circ d \circ r = j \circ p$.

    \begin{align*}
        j \circ d \circ r = j \circ d \circ s \circ p\\
        = d' \circ i \circ s \circ p \\
        = d' \circ s' \circ j \circ p \\
        = d' \circ s' \circ d'\circ q \\
        = d' \circ q \\
        = j \circ p
         \end{align*}
    
    \end{enumerate}
    This proves why the first square is a pullback square. \\
    In order to prove it is an absolute pullback, let $F:\Ca \to \D$ be a functor. In order to prove that $F$ applied to first square is still a pullback square, we shall repeat the proof above. Let $X'$ be an object of $\D$ and suppose we have the following diagram:
       \begin{equation}
        \begin{tikzcd}
        X \arrow[drr, bend left  = 80, "p"] \arrow[dr,dotted,"r"] \arrow[ddr,bend right= 80,"q"] & {} & {} & {}\\
          {}&  F(A) \arrow[r,"F(d)"] \arrow[d,"F(i)"] & F(B) \arrow[r,"F(s)"] \arrow[d,"F(j)"] & F(A) \arrow[d,"F(i)"] \\
            {}& F(A') \arrow[r,"F(d)'"] & F(B') \arrow[r,"F(s)'"] & F(A').
        \end{tikzcd}
    \end{equation}
    such that $F(j)\circ p = F(d')\circ q$.
    We define $r:= F(d)\circ p$.\\
     In order to prove that $F(i) \circ r = p$, the argument is the same as explained before. For the second equality, the fact that $j$ is a split monomorphism implies there exists $j':B' \to B$ such $j' \circ j = \op{id}_B$. We get the similar equality $F(j) \circ F(d) \circ r = F(j) \circ p$. Precomposing with $F(j')$, we get that $F(d)\circ r = p$.
    
\end{proof}
The following lemma is about the existence of absolute pushouts in the category of simplices.
\begin{lemma}
    Let $K$ be a simplicial set and let $(n,\sigma) \in \Delta_{/K}$. Let $s^{n-1}_i:(n,\sigma) \to (n-1,\sigma_i)$ and $s^{n-1}_j: (n,\sigma_i) \to (n-1,\sigma_j)$ be two distinct epimorphisms induced by the degeneracy maps. Then the pushout of $s^{n-1}_i$ along $s^{n-1}_j$ exist in $\Delta_{/K}$. Morever, it is an absolute pushout.
\end{lemma}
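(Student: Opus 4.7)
The plan is to exhibit an explicit commutative square in $\Delta_{/K}$ and then invoke \cref{absolutepullbacklemma} (in its pushout/split-epi form) to show it is an absolute pushout. Assume without loss of generality that $i < j$, and set $\sigma_{ij} := d_i d_j \sigma \in K_{n-2}$, where $d_\bullet$ denotes the face operators on $K$. The face-face identity $d_i d_j = d_{j-1} d_i$ for $i < j$, combined with $d_k s_k = d_{k+1} s_k = \mathrm{id}$ applied to $\sigma = s_j\sigma_j = s_i\sigma_i$, yields $s_i\sigma_{ij} = \sigma_j$ and $s_{j-1}\sigma_{ij} = \sigma_i$; thus the cosimplicial degeneracies $[n-1] \to [n-2]$ lift to morphisms $s^{n-2}_i : (n-1,\sigma_j) \to (n-2,\sigma_{ij})$ and $s^{n-2}_{j-1} : (n-1,\sigma_i) \to (n-2,\sigma_{ij})$ in $\Delta_{/K}$. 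The degeneracy-degeneracy identity $\sigma^i\sigma^j = \sigma^{j-1}\sigma^i$ in $\Delta$ (for $i < j$) then gives a commutative square
\[
\begin{tikzcd}
(n,\sigma) \arrow[r,"s^{n-1}_j"] \arrow[d,"s^{n-1}_i"] & (n-1,\sigma_j) \arrow[d,"s^{n-2}_i"] \\
(n-1,\sigma_i) \arrow[r,"s^{n-2}_{j-1}"] & (n-2,\sigma_{ij})
\end{tikzcd}
\]
in $\Delta_{/K}$, which is the candidate absolute pushout.

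To feed this to \cref{absolutepullbacklemma}, I will enlarge it into the $2\times 3$ diagram required by that lemma by supplying two sections $d,d'$ and a splitting of the vertical epi. Choose $d := d^n_{j+1} : (n-1,\sigma_j) \to (n,\sigma)$, which is a section of $s^{n-1}_j$ because $d_{j+1}s_j = \mathrm{id}$, and $d' := d^{n-1}_j : (n-2,\sigma_{ij}) \to (n-1,\sigma_i)$, a section of $s^{n-2}_{j-1}$ because $d_j s_{j-1} = \mathrm{id}$. The commutativity of the resulting left square is precisely the mixed simplicial identity $d_{j+1}s_i = s_i d_j$, valid whenever $j > i$. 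Finally, the morphism $s^{n-1}_i$ that plays the role of $j$ in the lemma is split as an epi by $d^n_i : (n-1,\sigma_i) \to (n,\sigma)$, since $d_i s_i = \mathrm{id}$. All hypotheses of \cref{absolutepullbacklemma} are then verified, and its conclusion delivers both the pushout property and its absoluteness.

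The main thing to watch is that each of the face maps above is genuinely a morphism in $\Delta_{/K}$, i.e.\ that its source carries the claimed simplex of $K$. For $d^n_{j+1}$ with target $(n,\sigma)$ the source must be $(n-1,\sigma_j)$, and $d_{j+1}\sigma = \sigma_j$ follows from $d_{j+1}s_j = \mathrm{id}$ applied to $\sigma = s_j\sigma_j$. For $d^{n-1}_j$ the source must be $(n-2,\sigma_{ij})$, which uses the symmetric face identity $d_j d_i = d_i d_{j+1}$ (for $i \leq j$) together with $d_{j+1}\sigma = d_j\sigma = \sigma_j$ to reduce $d_j\sigma_i$ to $d_i\sigma_j = \sigma_{ij}$. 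The subtlest situation is the edge case $j = i+1$, in which $\sigma$ is doubly $i$-degenerate and $\sigma_i = \sigma_{i+1}$; the uniform choice $d = d^n_{j+1}$, $d' = d^{n-1}_j$ is precisely what avoids a case split, so that the whole verification collapses to a single application of \cref{absolutepullbacklemma}.
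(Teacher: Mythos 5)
Your proof is correct and follows essentially the same route as the paper: both build the $2\times 3$ retract diagram around the commuting square of degeneracies and invoke \cref{absolutepullbacklemma} in its split-epi/pushout form, the only (immaterial) difference being that you section $s^{n-1}_j$ and $s^{n-2}_{j-1}$ by $d^n_{j+1}$ and $d^{n-1}_j$ and let $s^{n-1}_i$ play the split-epi role, whereas the paper sections $s^{n-1}_i$ and $s^{n-2}_i$ by $d^n_i$ and $d^{n-1}_i$ and splits $s^{n-1}_j$. Your verification that each face map really lives in $\Delta_{/K}$ is in fact more explicit than the paper's.
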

\begin{proof}
    Without loss of generality assume $i < j$. 
    By the simplicial identities in $\Delta$ we have the following diagram 
       \begin{equation}
        \begin{tikzcd}
        (n-1,\sigma_i)\arrow[r,"d^n_i"] \arrow[d,"s^{n-2}_{j-1}"] & (n,\sigma) \arrow[r,"s^{n-1}_i"] \arrow[d,"s^{n-1}_j"] & (n-1,\sigma_i) \arrow[d,"s^{n-2}_{j-1}"] \\
             (n-2,\sigma_{ij}) \arrow[r,"d^{n-1}_i"] & (n-1,\sigma_{ij}) \arrow[r,"s^{n-2}_i"] & (n-2,\sigma_{ij})
        \end{tikzcd}
    \end{equation}
    where all the squares are commutative. Notice that we have simplicial identites $s^{n-1}_i\circ d^n_i = \op{id}_{(n-1,\sigma_i)}$ and $s^{n-2}_i\circ d^{n-1}_i = \op{id}_{(n-2,\sigma_{ij})}$ (\cite[Section 3.1]{hoveymodel}). As $s^n_j$ is a split epimorphism ($s^{n-1}_j\circ d^n_j = \op{id}_{(n-1,\sigma_j)}$), applying \cref{absolutepullbacklemma}, gives us that pushout exists and the pushout square is an absolute pushout.
\end{proof}
\section{Remarks on anodynes.}

\begin{lemma}\label{anodyne2outof3}
    Let 
    \begin{equation}
        \begin{tikzcd}
            X \arrow[rr,"f"] \arrow[dr,"h",swap] && Y \arrow[dl,"g"] \\
            {} & Z & {}
        \end{tikzcd}
    \end{equation}
    be a commutative diagram in $\sset$. If $f,h$ are anodynes and $g$ is a monomorphism of simplicial sets, then $g$ is anodyne.
\end{lemma}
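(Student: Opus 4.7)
The plan is to reduce to the standard 2-out-of-3 property for weak equivalences by invoking the well-known characterization of anodyne morphisms in the Quillen model structure on $\sset$: a morphism is anodyne if and only if it is a trivial cofibration, that is, a monomorphism that is also a weak equivalence (this is a classical theorem; see for instance \cite[Proposition 2.1.2.6]{HTT} combined with the definition of the Kan model structure).

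Granted this characterization, the argument proceeds in three short steps. First, from the hypothesis that $f$ and $h$ are anodyne, conclude that $f$ and $h$ are both monomorphisms and weak equivalences in the Kan model structure on $\sset$. Second, apply the 2-out-of-3 property of weak equivalences to the commutative triangle $h = g \circ f$: since $f$ and $h$ are weak equivalences, $g$ must be a weak equivalence as well. Third, combine this with the standing assumption that $g$ is a monomorphism to conclude that $g$ is a trivial cofibration, hence anodyne.

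There is essentially no obstacle here beyond citing the correct black box. The only subtlety worth noting is that one really does need $g$ to be a monomorphism a priori: the 2-out-of-3 property for weak equivalences gives $g$ as a weak equivalence for free, but weak equivalences need not be cofibrations, so the monomorphism hypothesis on $g$ is essential to promote it back to an anodyne. In the application inside the proof of \cref{mappingfunctorinjfibrant}, the map $j_1$ is visibly a monomorphism since it is the inclusion of a union of subcomplexes, so the hypothesis is easily verified in context.
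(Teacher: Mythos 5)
Your proof is correct and follows exactly the same route as the paper's: apply the two-out-of-three property of weak equivalences to conclude $g$ is a weak equivalence, then use the monomorphism hypothesis to promote it to a trivial cofibration, i.e.\ an anodyne. The identification of anodynes with trivial cofibrations in the Kan model structure is indeed the only black box needed, and your remark that the monomorphism hypothesis on $g$ is essential matches the paper's use of the lemma.
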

\begin{proof}
    By two out of three property of weak equivalences, we get that $g$ is a weak equivalence. As $g$ is a monomorphism and hence a cofibration, $g$ is a trivial cofibration i.e. an anodyne.
\end{proof}
The following lemma will be used in proving \cref{mappingfunctorinjfibrant}.
\begin{lemma}\label{anodyonepushoutlemma}
    Let
    \begin{equation}\label{anodynecube}
        \begin{tikzcd}
            A_0 \arrow[rr] \arrow[dr,"f_0"] \arrow[dd] && A_1 \arrow[dd] \arrow[dr,"f_1"] & {} \\
            {} & B_0 \arrow[rr] \arrow[dd] && B_1 \arrow[dd] \\
             A_2 \arrow[rr] \arrow[dr,"f_2"]  && A_3  \arrow[dr,"f_3"] & {}\\
                   {} & B_2 \arrow[rr] && B_3       
        \end{tikzcd}
    \end{equation}
    be a cube in $\sset$ with the following properties :
    \begin{enumerate}
        \item $f_0,f_1$ and $f_2$ are anodynes.
        \item The front and back squares are pushout squares.
        \item The morphism $f_{01}: B_0 \coprod_{A_0}A_1 \to B_1$ is a monomorphism.
    \end{enumerate}
    Then $f_3$ is anodyne.
\end{lemma}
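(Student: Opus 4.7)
The plan is to decompose $f_3$ into a composite of two maps, each of which is a cobase change of an anodyne morphism, after first establishing that the comparison map $f_{01}$ is itself anodyne. Throughout, we rely on the fact that in the Quillen model structure on $\sset$, anodynes (trivial cofibrations) are stable under cobase change, combined with \cref{anodyne2outof3}.

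\textbf{Step 1 (showing $f_{01}$ is anodyne).} Factor $f_1$ through the pushout $B_0 \coprod_{A_0} A_1$:
\begin{equation}
A_1 \xrightarrow{g} B_0 \coprod_{A_0} A_1 \xrightarrow{f_{01}} B_1,
\end{equation}
where $g$ is the canonical map from the pushout. Since $g$ is the cobase change of the anodyne $f_0$ along $A_0 \to A_1$, it is anodyne. Since $f_1$ is anodyne by hypothesis and $f_{01}$ is a monomorphism (hypothesis (3)), \cref{anodyne2outof3} forces $f_{01}$ to be anodyne.

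\textbf{Step 2 (factoring $f_3$).} Using that the back face is a pushout, write $A_3 = A_1 \coprod_{A_0} A_2$, and using that the front face is a pushout, write $B_3 = B_1 \coprod_{B_0} B_2$. Factor $f_3$ as
\begin{equation}
A_3 \;=\; A_1 \coprod_{A_0} A_2 \;\xrightarrow{h_1}\; A_1 \coprod_{A_0} B_2 \;\xrightarrow{h_2}\; B_1 \coprod_{B_0} B_2 \;=\; B_3.
\end{equation}
The first map $h_1$ is the cobase change of $f_2$ along $A_0 \to A_1$, hence anodyne. For $h_2$, apply the pasting law for pushouts to identify
\begin{equation}
A_1 \coprod_{A_0} B_2 \;\cong\; \bigl(B_0 \coprod_{A_0} A_1\bigr) \coprod_{B_0} B_2,
\end{equation}
so that $h_2$ is the cobase change of $f_{01}$ along $B_0 \to B_2$, hence anodyne by Step 1.

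\textbf{Step 3 (conclusion).} As a composite of anodynes $f_3 = h_2 \circ h_1$ is anodyne. The main obstacle is really just Step 1: establishing $f_{01}$ is anodyne requires the monomorphism hypothesis together with \cref{anodyne2outof3}, and it is the place where assumption (3) of the lemma gets used. Once $f_{01}$ is anodyne, the rest is a mechanical pushout-pasting calculation with no further input.
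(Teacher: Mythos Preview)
Your proof is correct and follows essentially the same strategy as the paper's: first show $f_{01}$ is anodyne via \cref{anodyne2outof3}, then factor $f_3$ through the intermediate object $A_1 \coprod_{A_0} B_2$ as a composite of two cobase changes of anodynes. The paper packages this by constructing an auxiliary ``all-pushouts'' cube with $B_1' = B_0 \coprod_{A_0} A_1$ and $B_3' = B_1' \coprod_{B_0} B_2 \cong A_1 \coprod_{A_0} B_2$, so your $h_1$ and $h_2$ are exactly the paper's $f_3'$ and $f_{23}$; the only difference is presentational.
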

\begin{proof}
    Let $B_1':=A_1 \coprod_{A_0} B_0$. Consider the cube :
       \begin{equation} \label{anodynecubepushout}
        \begin{tikzcd}
            A_0 \arrow[rr] \arrow[dr,"f_0"] \arrow[dd] && A_1 \arrow[dd] \arrow[dr,"f_1'"] & {} \\
            {} & B_0 \arrow[rr] \arrow[dd] && B_1' \arrow[dd] \\
             A_2 \arrow[rr] \arrow[dr,"f_2"]  && A_3  \arrow[dr,"f_3'"] & {}\\
                   {} & B_2 \arrow[rr] && B_3'      
        \end{tikzcd}
    \end{equation}
    where all the six squares are pushouts. As anodynes are preserved under pushouts, we get that $f_1'$ and $f_3'$ are anodyne morphisms. The  cube \cref{anodynecubepushout} decomposes \cref{anodynecube} into a bigger diagram :
       \begin{equation}
        \begin{tikzcd}
            A_0 \arrow[rr] \arrow[dr,"f_0"] \arrow[dd] && A_1 \arrow[dd] \arrow[rr,"\op{id}"] \arrow[dr,"f_1'"] && A_1 \arrow[dd] \arrow[dr,"f_1"] \\
            {} & B_0 \arrow[rr] \arrow[dd] && B_1'\arrow[dd] \arrow[rr,"f_{01}",swap] && B_1 \arrow[dd] \\
             A_2 \arrow[rr] \arrow[dr,"f_2"]  && A_3  \arrow[dr,"f_3'"] \arrow[rr,"\op{id}",swap] &&  A_3 \arrow[dr,"f_3"] & {}\\
                {} & B_2 \arrow[rr] && B_3' \arrow[rr,"f_{23}"] && B_3 .
        \end{tikzcd}
    \end{equation}
    As $f_1'$ and $f_1$ are anodynes and $f_{01}$ is a monomorphism, by \cref{anodyne2outof3}, we get that $f_{01}$ is anodyne. As the squares in the front face of the bigger cubes are all pushouts, we get that $f_{23}$ is anodyne (being the pushout of $f_{01}$). Thus $f_3= f_{23} \circ f_3'$ is anodyne. 
\end{proof}

\end{appendices}

\bibliographystyle{abbrv}

\end{document}